\begin{document}
\title[\hfil Studies of normalized solutions]
{Studies of normalized solutions to Schr\"{o}dinger equations with
Sobolev critical exponent and combined nonlinearities}

\author[X. F. Li]
{Xinfu Li}

\address{Xinfu Li \newline
School of Science, Tianjin University of Commerce, Tianjin 300134,
Peoples's Republic of China} \email{lxylxf@tjcu.edu.cn}

\subjclass[2020]{35J20, 35B33.}
\keywords{multiplicity; orbital
stability, nonexistence; normalized solutions; critical exponent.}

\begin{abstract}
We consider the  Sobolev critical Schr\"{o}dinger equation with
combined nonlinearities
\begin{equation*}
\begin{cases}
-\Delta u=\lambda u+|u|^{2^*-2}u+\mu|u|^{q-2}u,\  \
x\in\mathbb{R}^{N},\\
u\in H^1(\mathbb{R}^N),\ \int_{\mathbb{R}^N}|u|^2dx=a,
\end{cases}
\end{equation*}
where $N\geq 3$, $\mu>0$, $\lambda\in \mathbb{R}$, $a>0$ and $q\in
(2,2^*)$. We prove in this paper

(1) Multiplicity and stability of solutions for $q\in
(2,2+\frac{4}{N})$ and $\mu a^{\frac{q(1-\gamma_q)}{2}}\leq
(2K)^{\frac{q\gamma_q-2^*}{2^*-2}}$ with
$\gamma_q:=\frac{N}{2}-\frac{N}{q}$ and  $K$ being some positive
constant. This result extends the results obtained in Jeanjean et
al. \cite{JEANJEAN-JENDREJ} and Jeanjean and Le \cite{Jeanjean-Le}
for the case $\mu
a^{\frac{q(1-\gamma_q)}{2}}<(2K)^{\frac{q\gamma_q-2^*}{2^*-2}}$ to
the case $\mu a^{\frac{q(1-\gamma_q)}{2}}\leq
(2K)^{\frac{q\gamma_q-2^*}{2^*-2}}$.

(2) Nonexistence of ground states for $q=2+\frac{4}{N}$ and  $\mu
a^{\frac{q(1-\gamma_q)}{2}}\geq\bar{a}_N$ with $\bar{a}_N$ being
some positive constant. We give  a new  proof to this result
different with Wei and Wu \cite{Wei-Wu 2021}.
\end{abstract}

\maketitle \numberwithin{equation}{section}
\newtheorem{theorem}{Theorem}[section]
\newtheorem{lemma}[theorem]{Lemma}
\newtheorem{definition}[theorem]{Definition}
\newtheorem{remark}[theorem]{Remark}
\newtheorem{proposition}[theorem]{Proposition}
\newtheorem{corollary}[theorem]{Corollary}
\allowdisplaybreaks

\section{Introduction and main results}

\setcounter{section}{1}
\setcounter{equation}{0}

In this paper, we study the  standing waves to the Sobolev critical
Schr\"{o}dinger equation with combined nonlinearities
\begin{equation}\label{e1.2}
i\partial_t\psi+\Delta
\psi+|\psi|^{2^*-2}\psi+\mu|\psi|^{q-2}\psi=0,\ (t,x)\in
\mathbb{R}\times \mathbb{R}^N,
\end{equation}
where $N\geq 3$, $\mu>0$, $2^*:=\frac{2N}{N-2}$ and $q\in(2,2^*)$.
Starting from the fundamental contribution by  Tao, Visan and Zhang
\cite{Tao-Visan-Zhang}, the nonlinear Schr\"{o}dinger equation with
combined nonlinearities attracted much attention, see for example
\cite{{Akahori-Ibrahim-Kikuchi},{Feng 2018},{Le Coz-Martel
2016},{Li-Zhao 2020},{Zhang 2006}}.

Standing waves to (\ref{e1.2}) are solutions of the form $\psi(t, x)
=e^{-i\lambda t}u(x)$, where $\lambda\in \mathbb{R}$ and
$u:\mathbb{R}^N\to \mathbb{C}$. Then $u$ satisfies the equation
\begin{equation}\label{e1.1}
-\Delta u=\lambda u+|u|^{2^*-2}u+\mu|u|^{q-2}u,\ x\in
\mathbb{R}^{N}.
\end{equation}
When looking for solutions to (\ref{e1.1}) one choice is to fix
$\lambda<0$ and to search for solutions to (\ref{e1.1}) as critical
points of the action functional
\begin{equation*}
J(u):=\int_{\mathbb{R}^N}\left(\frac{1}{2}|\nabla
u|^2-\frac{\lambda}{2}|u|^2-\frac{1}{2^*}|u|^{2^*}-\frac{\mu}{q}|u|^q\right)dx,
\end{equation*}
see for example \cite{{Alves-Souto},{Liu-Liao-Tang 2017}} and the
references therein. Another choice is to fix the $L^2$-norm of the
unknown $u$, that is, to consider the problem
\begin{equation}\label{e1.11}
\begin{cases}
-\Delta u=\lambda u+|u|^{2^*-2}u+\mu|u|^{q-2}u, \  x\in
\mathbb{R}^{N},\\
u\in H^1(\mathbb{R}^N),\ \int_{\mathbb{R}^N}|u|^2dx=a
\end{cases}
\end{equation}
with  fixed $a>0$ and unknown $\lambda\in \mathbb{R}$. In this
direction, define on $H^1(\mathbb{R}^N)$ the energy functional
\begin{equation*}
E(u):=\frac{1}{2}\int_{\mathbb{R}^N}|\nabla
u|^2dx-\frac{1}{2^*}\int_{\mathbb{R}^N}|u|^{2^*}dx
-\frac{\mu}{q}\int_{\mathbb{R}^N}|u|^{q}dx.
\end{equation*}
It is standard to check that $E\in C^1$ and a critical point of $E$
constrained to
\begin{equation}\label{e1.3}
S_a:=\{u\in H^1(\mathbb{R}^N):\int_{\mathbb{R}^N}|u|^2dx=a\}
\end{equation}
gives rise to a solution to (\ref{e1.11}). Such solution is usually
called a normalized solution of (\ref{e1.1}) on $S_a$, which is the
aim of this paper. In studying  normalized solutions to the
Schr\"{o}dinger equation
\begin{equation}\label{e1.12}
-\Delta u=\lambda u+|u|^{p-2}u+\mu|u|^{q-2}u,\ x\in \mathbb{R}^{N}
\end{equation}
with $2<q<p\leq 2^*$, the so-called $L^2$-critical exponent
$2+\frac{4}{N}$ plays an important role.  A very complete analysis
of the various cases that may happen for (\ref{e1.12})-(\ref{e1.3}),
depending on the values of $(p,q)$, has been provided recently in
\cite{{JEANJEAN-JENDREJ},{Jeanjean-Le},{Soave JDE},{Soave
JFA},{Wei-Wu 2021}}. For future reference, we recall

\begin{definition}\label{def1.1}
We say that $u$ is a normalized ground state to (\ref{e1.1}) on
$S_a$ or a ground state to (\ref{e1.11}) if
\begin{equation*}
E(u)=c_a^g:=\inf\{E(v): v\in S_a,\ (E|_{S_a})'(v)=0\}.
\end{equation*}
The set of the ground states to (\ref{e1.11}) will be denoted by
$\mathcal{G}_a$.
\end{definition}

\begin{definition}\label{def1.2}
$\mathcal{G}_a$ is orbitally stable if for every $\epsilon>0$ there
exists $\delta>0$ such that, for any $\psi_0\in H^1(\mathbb{R}^N)$
with $\inf_{v\in \mathcal{G}_a}\|\psi_0-v\|_{H^1}<\delta$, we have
\begin{equation*}
\inf_{v\in \mathcal{G}_a}\|\psi(t,\cdot)-v\|_{H^1}<\epsilon\ \
\mathrm{for\ any\ }t>0,
\end{equation*}
where $\psi(t,x)$ denotes the solution to (\ref{e1.2}) with initial
value $\psi_0$.

A standing wave $e^{-i\lambda t}u$ is strongly unstable if for every
$\epsilon>0$ there exists $\psi_0\in H^1(\mathbb{R}^N)$ such that
$\|\psi_0-u\|_{H^1}<\epsilon$, and $\psi(t,x)$ blows up in finite
time.
\end{definition}

We first consider the case  $q\in (2,2+\frac{4}{N})$. Recently,
\cite{JEANJEAN-JENDREJ} and \cite{Jeanjean-Le}  studied the
multiplicity and stability of solutions to (\ref{e1.11}) under the
condition $\mu
a^{\frac{q(1-\gamma_q)}{2}}<(2K)^{\frac{q\gamma_q-2^*}{2^*-2}}$
which is obtained as follows. By using  the Sobolev inequality (see
\cite{Brezis-Nirenberg 1983})
\begin{equation}\label{e1.13}
S\|u\|_{2^*}^2\leq \|\nabla u\|_2^2\ \  \mathrm{for\ all}\ u\in
D^{1,2}(\mathbb{R}^N)
\end{equation}
and the Gagliardo-Nirenberg inequality (see \cite{Weinstein 1983})
\begin{equation}\label{e1.14}
\|u\|_{q}\leq C_{N,q} \|\nabla u\|_2^{\gamma_q}\|u\|_2^{1-\gamma_q}\
\  \mathrm{for\ all}\ u\in H^{1}(\mathbb{R}^N),
\gamma_q:=\frac{N}{2}-\frac{N}{q},
\end{equation}
we have for any $u\in S_a$,
\begin{equation*}
\begin{split}
E(u)&\geq \frac{1}{2}\|\nabla
u\|_2^2-\frac{1}{2^*}\left(S^{-1}\|\nabla
u\|_2^2\right)^{\frac{2^*}{2}}-\frac{\mu}{q}C_{N,q}^q\|\nabla
u\|_2^{q\gamma_q}\|u\|_2^{q(1-\gamma_q)}\\
&=\|\nabla u\|_2^2f_{\mu, a}(\|\nabla u\|_2^2),
\end{split}
\end{equation*}
where
\begin{equation*}
f_{\mu,
a}(\rho):=\frac{1}{2}-\frac{1}{2^*}S^{-\frac{2^*}{2}}\rho^{\frac{2^*}{2}-1}
-\frac{\mu}{q}C_{N,q}^qa^{\frac{q(1-\gamma_q)}{2}}\rho^{\frac{q\gamma_q}{2}-1},\
\rho\in (0,\infty).
\end{equation*}
Direct calculations give that
\begin{equation*}
\max_{\rho>0}f_{\mu,a}(\rho)=f_{\mu,a}(\rho_{\mu,a})
\left\{\begin{array}{ll}
>0,&  \ \mathrm{if}\ \mu a^{\frac{q(1-\gamma_q)}{2}}<(2K)^{\frac{q\gamma_q-2^*}{2^*-2}},\\
=0,&  \ \mathrm{if}\ \mu a^{\frac{q(1-\gamma_q)}{2}}=(2K)^{\frac{q\gamma_q-2^*}{2^*-2}},\\
<0,&  \ \mathrm{if}\ \mu
a^{\frac{q(1-\gamma_q)}{2}}>(2K)^{\frac{q\gamma_q-2^*}{2^*-2}},
\end{array}\right.
\end{equation*}
where
\begin{equation*}
\rho_{\mu,a}=\left(\frac{(2-q\gamma_q)2^*S^{\frac{2^*}{2}}C_{N,q}^q\mu
a^{\frac{q(1-\gamma_q)}{2}}}{q(2^*-2)}\right)^{\frac{2}{2^*-q\gamma_q}}
\end{equation*}
and
\begin{equation*}
K=\frac{2^*-q\gamma_q}{2^*(2-q\gamma_q)S^{\frac{2^*}{2}}}
\left(\frac{2^*S^{\frac{2^*}{2}}(2-q\gamma_q)C_{N,q}^q}{q(2^*-2)}\right)^{\frac{2^*-2}{2^*-q\gamma_q}}.
\end{equation*}
Thus, the domain $\{(\mu,a)\in \mathbb{R}^2: \mu>0,\ a>0\}$ is
divided into three parts $\Omega_1$, $\Omega_2$ and $\Omega_3$ by
the curve $\mu
a^{\frac{q(1-\gamma_q)}{2}}=(2K)^{\frac{q\gamma_q-2^*}{2^*-2}}$ with
\begin{equation*}
\Omega_1=\{(\mu,a)\in \mathbb{R}^2: \mu>0,\ a>0, \ \mu
a^{\frac{q(1-\gamma_q)}{2}}<(2K)^{\frac{q\gamma_q-2^*}{2^*-2}}\},
\end{equation*}
\begin{equation*}
\Omega_2=\{(\mu,a)\in \mathbb{R}^2: \mu>0,\ a>0, \ \mu
a^{\frac{q(1-\gamma_q)}{2}}=(2K)^{\frac{q\gamma_q-2^*}{2^*-2}}\}
\end{equation*}
and
\begin{equation*}
\Omega_3=\{(\mu,a)\in \mathbb{R}^2: \mu>0,\ a>0, \ \mu
a^{\frac{q(1-\gamma_q)}{2}}>(2K)^{\frac{q\gamma_q-2^*}{2^*-2}}\}.
\end{equation*}
For fixed $\mu>0$, define $a_0$ such that
\begin{equation*}
\mu
a_0^{\frac{q(1-\gamma_q)}{2}}=(2K)^{\frac{q\gamma_q-2^*}{2^*-2}}.
\end{equation*}
Then $\Omega_1$ can also be expressed as
$\Omega_1=\{(\mu,a)\in\mathbb{R}^2,\mu>0, 0<a<a_0\}$. Define
\begin{equation*}
\rho_0:=\rho_{\mu,a_0}=\left(\frac{2^*(2-q\gamma_q)S^{\frac{2^*}{2}}}{2(2^*-q\gamma_q)}\right)^{\frac{2}{2^*-2}},
\end{equation*}
\begin{equation*}
B_{\rho_0}:=\{u\in H^1(\mathbb{R}^N):\|\nabla u\|_2^2<\rho_0\},\ \ \
V_a:=S_a\cap B_{\rho_0}.
\end{equation*}
Under the condition $(\mu,a)\in \Omega_1$, it was proved in
(\cite{JEANJEAN-JENDREJ}, Lemma 2.4) that
\begin{equation*}
m_a:=\inf_{u\in V_a}E(u)<0\leq \inf_{u\in \partial V_a}E(u).
\end{equation*}
Then by using the concentration compactness principle,
\cite{JEANJEAN-JENDREJ} obtained the following results:\\
\textbf{Theorem A}. Let $N\geq 3$, $q\in(2,2+\frac{4}{N})$, $\mu>0$
and $0<a<a_0$. Then (\ref{e1.11}) has a ground state $u$, which is a
 minimizer of $E$ on $V_a$. In addition, any ground state of
(\ref{e1.11}) is a  minimizer of $E$ on $V_a$. Moreover,
$\mathcal{G}_a$ is compact, up to translation, and it is orbitally
stable.

\medskip

Noting that
\begin{equation*}
\begin{split}
\Omega_1\cup \Omega_2&=\{(\mu,a)\in\mathbb{R}^2,\mu>0, 0<a\leq
a_0\}\\
&=\{(\mu,a)\in \mathbb{R}^2: \mu>0,\ a>0, \ \mu
a^{\frac{q(1-\gamma_q)}{2}}\leq
(2K)^{\frac{q\gamma_q-2^*}{2^*-2}}\},
\end{split}
\end{equation*}
we find that the results in  Theorem  A  can be extended to the case
$(\mu,a)\in \Omega_1\cup \Omega_2$ by repeating word by word  the
proof of the paper \cite{JEANJEAN-JENDREJ}:

\begin{theorem}\label{thm1.1}
Let $N\geq 3$, $q\in(2,2+\frac{4}{N})$, $\mu>0$,  $a>0$ and $\mu
a^{\frac{q(1-\gamma_q)}{2}}\leq (2K)^{\frac{q\gamma_q-2^*}{2^*-2}}$.
Then (\ref{e1.11}) has a ground state $u$, which is a  minimizer of
$E$ on $V_a$. In addition, any ground state of (\ref{e1.11}) is a
minimizer of $E$ on $V_a$. Moreover, $\mathcal{G}_a$ is compact, up
to translation, and it is orbitally stable.
\end{theorem}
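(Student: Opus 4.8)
The plan is to follow the local minimization scheme of \cite{JEANJEAN-JENDREJ} that produced Theorem A on $\Omega_1$, and to verify that each step survives the passage from the strict inequality to the boundary equality; the only genuinely new situation is the endpoint $a=a_0$, i.e. $(\mu,a)\in\Omega_2$. First I would record the local geometry. For every $u\in S_a$ one has $E(u)\geq\|\nabla u\|_2^2 f_{\mu,a}(\|\nabla u\|_2^2)$, and for $a\leq a_0$ the quantity $\mu a^{q(1-\gamma_q)/2}$ does not exceed $(2K)^{(q\gamma_q-2^*)/(2^*-2)}$, whence $f_{\mu,a}(\rho_0)\geq f_{\mu,a_0}(\rho_0)=0$. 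Thus $\inf_{u\in\partial V_a}E(u)\geq\rho_0 f_{\mu,a}(\rho_0)\geq 0$, the bound degenerating to an exact equality $=0$ precisely when $a=a_0$. On the other hand, writing $(t\star u)(x):=t^{N/2}u(tx)$ one computes $E(t\star u)=\tfrac{t^2}{2}\|\nabla u\|_2^2-\tfrac{t^{2^*}}{2^*}\|u\|_{2^*}^{2^*}-\tfrac{\mu}{q}t^{q\gamma_q}\|u\|_q^q$, and since $q<2+\frac4N$ forces $q\gamma_q<2<2^*$, the last term dominates as $t\to 0^+$, so $E(t\star u)<0$ and $t\star u\in V_a$ for small $t$; hence $m_a<0$. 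This reproduces $m_a<0\leq\inf_{\partial V_a}E$ exactly as in \cite[Lemma 2.4]{JEANJEAN-JENDREJ}, now including $a=a_0$.

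Next I would show that minimizing sequences stay uniformly in the interior. If $u_n\in V_a$ satisfies $E(u_n)\to m_a<0$ but $\|\nabla u_n\|_2^2\to\rho_0$ along a subsequence, then $E(u_n)\geq\|\nabla u_n\|_2^2 f_{\mu,a}(\|\nabla u_n\|_2^2)\to\rho_0 f_{\mu,a}(\rho_0)\geq 0$, contradicting $E(u_n)\to m_a<0$. The point is that it is the \emph{strict} negativity of $m_a$, rather than any strictness of the boundary bound, that produces the decisive energy gap; consequently $\|\nabla u_n\|_2^2\leq\rho_0-\delta$ for some $\delta>0$ and all large $n$, any translated weak limit is again an interior point, and a minimizer of $E$ on $V_a$ is an interior local minimizer, hence a critical point of $E|_{S_a}$ by the Lagrange multiplier rule.

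I would then run the concentration--compactness argument of \cite{JEANJEAN-JENDREJ} on $\{u_n\}$: vanishing is excluded because $m_a<0$ forbids the nonlinear terms from disappearing in the limit, and dichotomy is excluded by the strict subadditivity $m_a<m_\alpha+m_{a-\alpha}$ for $0<\alpha<a$, which only involves sub-masses $\alpha,\,a-\alpha<a_0$ and is therefore untouched by the endpoint case. This gives strong convergence up to translation, a minimizer $u$, and the compactness of $\mathcal{G}_a$ modulo translations. To identify $\mathcal{G}_a$ with the minimizers of $E$ on $V_a$ I would invoke the Pohozaev identity satisfied by any normalized solution to force its gradient norm below $\rho_0$, so that $c_a^g=m_a$ and every ground state is an interior minimizer, and conversely. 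Orbital stability then follows from the Cazenave--Lions argument: conservation of mass and energy, combined with the confinement $\|\nabla\psi(t)\|_2^2<\rho_0$ that keeps the $H^1$-norm bounded (hence the flow global) and $E$ near $m_a$, forces the flow to remain close to the compact set $\mathcal{G}_a$.

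The main obstacle is exactly the single place where \cite{JEANJEAN-JENDREJ} used the strict inequality on $\Omega_1$, namely the separation $\inf_{\partial V_a}E>m_a$ of the interior minimum from the boundary. The whole plan rests on the observation that at $a=a_0$ this bound weakens only to $\inf_{\partial V_a}E\geq 0$, while $m_a<0$ is unchanged, so the gap $\inf_{\partial V_a}E\geq 0>m_a$ persists; once this is secured, the interior trapping of minimizing sequences, the exclusion of dichotomy, and the stability argument all carry over essentially word for word.
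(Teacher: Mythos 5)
Your proposal is correct and follows essentially the same route as the paper: the paper establishes Theorem \ref{thm1.1} precisely by noting that the local-minimization proof of Theorem A in \cite{JEANJEAN-JENDREJ} carries over word for word to the endpoint case $\mu a^{\frac{q(1-\gamma_q)}{2}}=(2K)^{\frac{q\gamma_q-2^*}{2^*-2}}$, where the boundary estimate degrades from $\inf_{u\in\partial V_a}E(u)>0$ to $\inf_{u\in\partial V_a}E(u)\geq 0$ (as recorded in Lemma \ref{lem6.6}) while $m_a<0$ is unchanged. The gap $m_a<0\leq\inf_{u\in\partial V_a}E(u)$ that you isolate as the only place where strictness was used is exactly the observation on which the paper's extension rests.
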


In addition, under the condition $q\in (2,2+\frac{4}{N})$ and
$(\mu,a)\in \Omega_1$, for any $u\in S_a$, the fiber map
\begin{equation}\label{e6.6}
\begin{split}
\Psi_u(\tau):=E(u_\tau)=\frac{1}{2}\tau^{2}\|\nabla
u\|_2^2-\frac{1}{2^*}\tau^{2^*}\|u\|_{2^*}^{2^*}-\frac{\mu}{q}\tau^{q\gamma_q}\|u\|_q^{q}
\end{split}
\end{equation}
with
\begin{equation*}
u_\tau(x):=\tau^{\frac{N}{2}}u(\tau x),\ x\in\mathbb{R}^N,\ \tau>0
\end{equation*}
has exactly two critical points, one is a local minimum point at
negative level and the other one is a global maximum point at
positive level. Consequently, the Poho\v{z}aev set
\begin{equation*}
\mathcal{P}_{a}:=\{u\in S_a: P(u)=0\}\ \mathrm{with}\ P(u):=\|\nabla
u\|_2^2-\|u\|_{2^*}^{2^*}-\mu \gamma_q\|u\|_q^{q}
\end{equation*}
admits the decomposition into the disjoint union
$\mathcal{P}_{a}=\mathcal{P}_{a,+}\cup \mathcal{P}_{a,-}$, where
\begin{equation*}
\mathcal{P}_{a,+}:=\{u\in \mathcal{P}_{a}: E(u)<0\}\ \mathrm{and} \
\mathcal{P}_{a,-}:=\{u\in \mathcal{P}_{a}: E(u)> 0\},
\end{equation*}
see Lemma 2.4 in \cite{Jeanjean-Le}. Moreover, $0<\inf_{u\in
\mathcal{P}_{a,-}}E(u)<\frac{1}{N}{S^{\frac{N}{2}}}$, see
Propositions 1.15 and 1.16 in \cite{Jeanjean-Le} for $N\geq 4$ and
Lemma 3.1 and Remark 3.1 in \cite{Wei-Wu 2021} for $N\geq 3$. By
using these results and the mountain pass lemma, \cite{Jeanjean-Le}
obtained the following
results (The case $N=3$ was complemented  by \cite{Wei-Wu 2021}):\\
\textbf{Theorem B}. Let $N\geq 3$, $q\in(2,2+\frac{4}{N})$, $\mu>0$,
$a>0$ and $\mu
a^{\frac{q(1-\gamma_q)}{2}}<(2K)^{\frac{q\gamma_q-2^*}{2^*-2}}$.
Then there exists a mountain pass type solution $v$ to (\ref{e1.11})
with $\lambda<0$ and
\begin{equation*}
0<E(v)=\inf_{u\in
\mathcal{P}_{a,-}}E(u)<m_a+\frac{S^{\frac{N}{2}}}{N}.
\end{equation*}
Moreover,
the associated standing wave $e^{-i\lambda t}v(x)$ is strongly
unstable.

\medskip

We wonder what will happen if $\mu
a^{\frac{q(1-\gamma_q)}{2}}=(2K)^{\frac{q\gamma_q-2^*}{2^*-2}}$. By
examining the proof of Theorem B, we find that $\inf_{u\in
\mathcal{P}_{a,-}}E(u)\geq 0$ and there are two possibilities:

(1) If $\inf_{u\in \mathcal{P}_{a,-}}E(u)=0$, then by using
(\ref{e1.13}) and  (\ref{e1.14}), we can show that there is no $u\in
\mathcal{P}_{a}$ such that $E(u)=0$;

(2) If $\inf_{u\in \mathcal{P}_{a,-}}E(u)>0$, we can certainly
obtain the same results as in Theorem B.\\
At first, we try to show that $\inf_{u\in \mathcal{P}_{a,-}}E(u)=0$
by choosing some functions and find that it is a difficult task.
Then we turn to study the properties satisfied by  $\{u_n\}$ with
$\{u_n\}\subset \mathcal{P}_{a,-}$ and $E(u_n)\to 0$, and find that
such $\{u_n\}$ does not exist. Hence, $\inf_{u\in
\mathcal{P}_{a,-}}E(u)>0$ (see Lemma \ref{lem7.21}) and  we obtain
the following result:

\begin{theorem}\label{thm1.8}
Let $N\geq 3$, $q\in(2,2+\frac{4}{N})$, $\mu>0$, $a>0$ and $\mu
a^{\frac{q(1-\gamma_q)}{2}}\leq (2K)^{\frac{q\gamma_q-2^*}{2^*-2}}$.
Then there exists a mountain pass type solution $v$ to (\ref{e1.11})
with $\lambda<0$ and
\begin{equation*}
0<E(v)=\inf_{u\in
\mathcal{P}_{a,-}}E(u)<m_a+\frac{1}{N}S^{\frac{N}{2}}.
\end{equation*}
Moreover, the associated standing wave $e^{-i\lambda t}v(x)$ is
strongly unstable.
\end{theorem}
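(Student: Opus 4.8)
The plan is to reduce the statement to the single new ingredient that distinguishes it from Theorem B, namely the strict positivity $c:=\inf_{u\in\mathcal{P}_{a,-}}E(u)>0$ on the boundary locus $\Omega_2$. When $\mu a^{q(1-\gamma_q)/2}<(2K)^{(q\gamma_q-2^*)/(2^*-2)}$ we are in $\Omega_1$ and Theorem B applies verbatim, so it suffices to treat the equality case $\mu a^{q(1-\gamma_q)/2}=(2K)^{(q\gamma_q-2^*)/(2^*-2)}$. Here the analysis of the fiber map $\Psi_u$ in (\ref{e6.6}) and the decomposition $\mathcal{P}_a=\mathcal{P}_{a,+}\cup\mathcal{P}_{a,-}$ go through exactly as before, and the combined inequality $E(u)\geq\|\nabla u\|_2^2 f_{\mu,a}(\|\nabla u\|_2^2)$ together with $\max_{\rho>0}f_{\mu,a}(\rho)=0$ yields, after maximizing over dilations, $E(u)\geq 0$ for every $u\in\mathcal{P}_{a,-}$. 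Thus $c\geq 0$ and the whole issue is to exclude $c=0$.

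To prove $c>0$ I would argue by contradiction, following the route indicated before Lemma \ref{lem7.21}: suppose $\{u_n\}\subset\mathcal{P}_{a,-}$ with $E(u_n)\to 0$, and set $t_n=\|\nabla u_n\|_2^2$, $A_n=\|u_n\|_{2^*}^{2^*}$, $B_n=\|u_n\|_q^q$. The Poho\v{z}aev identity $t_n=A_n+\mu\gamma_q B_n$ lets me rewrite $E(u_n)=\frac{1}{N}A_n-\mu\frac{2-q\gamma_q}{2q}B_n$, so $E(u_n)\to 0$ forces $A_n$ and $B_n$ to be comparable; feeding this into (\ref{e1.13}) and (\ref{e1.14}) shows $t_n,A_n,B_n$ stay bounded and bounded away from $0$ (the exponent $q\gamma_q<2$ rules out both $t_n\to 0$ and $t_n\to\infty$). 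Passing to limits $t_n\to t_*$, $A_n\to A_*$, $B_n\to B_*$ and using that $\tau=1$ is the global maximum of $\Psi_{u_n}$, a squeeze between $\Psi_{u_n}(1)\to 0$ and $\max_\tau(\tau^2 t_n)f_{\mu,a}(\tau^2 t_n)=0$ forces the gaps in both (\ref{e1.13}) and (\ref{e1.14}) to vanish, i.e. asymptotic equality in the Sobolev and Gagliardo-Nirenberg inequalities, while $t_* f_{\mu,a}(t_*)=0$ pins the scale at $t_*=\rho_0$.

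The main obstacle is turning this asymptotic double-extremality into a contradiction. A function that asymptotically saturates (\ref{e1.13}) must, up to translation and dilation, resemble an Aubin-Talenti bubble, whereas saturating (\ref{e1.14}) requires closeness to the (exponentially decaying) Gagliardo-Nirenberg extremal profile, and these are incompatible. To make this rigorous I would run a concentration-compactness / profile decomposition on the bounded sequence $\{u_n\}$: vanishing is excluded since it would force $B_n\to 0$, against $B_*>0$; a concentrating bubble is excluded since for $q<2^*$ its $L^q$-mass tends to $0$, again against $B_*>0$; and in the remaining compact case $u_n$ converges, after translation, to a genuine finite-mass minimizer of the Sobolev quotient, which must be an Aubin-Talenti function and hence cannot simultaneously extremize (\ref{e1.14}). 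This contradiction yields $c>0$, which is Lemma \ref{lem7.21}.

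With $c>0$ in hand, the remainder is identical to the proof of Theorem B in \cite{Jeanjean-Le} and \cite{Wei-Wu 2021}. The mountain pass geometry built from $V_a$ and the fiber maps produces a Palais-Smale sequence for $E|_{S_a}$ at the level $c=\inf_{\mathcal{P}_{a,-}}E$, which by the usual scaling device may be taken to nearly satisfy $P(u_n)=0$. The strict gap $c<m_a+\frac{1}{N}S^{N/2}$, established by testing the minimax with the $V_a$-ground state of Theorem \ref{thm1.1} glued to a rescaled truncated Aubin-Talenti bubble, keeps the level below the first bubbling threshold and restores compactness, so the sequence converges strongly to a solution $v$ with $0<E(v)=c$; combining $P(v)=0$ with the Nehari-Poho\v{z}aev relations gives $\lambda<0$. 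Finally, since $v\in\mathcal{P}_{a,-}$ is a strict global maximum of its fiber map with $P(v_\tau)<0$ for $\tau>1$, the standard virial/blow-up argument shows that the standing wave $e^{-i\lambda t}v$ is strongly unstable, completing the proof.
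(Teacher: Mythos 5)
Your proposal is correct in substance and its skeleton coincides with the paper's: everything is reduced to the strict positivity of $c:=\inf_{u\in\mathcal{P}_{a,-}}E(u)$ in the equality case (Lemma \ref{lem7.21}); from a contradiction sequence one extracts $\|\nabla u_n\|_2^2\to\rho_0$ together with asymptotic equality in both (\ref{e1.13}) and (\ref{e1.14}) --- this is exactly the paper's computation (\ref{e7.22}), and your squeeze via the fiber maps is an equivalent packaging --- and the contradiction comes from the fact that no function can extremize both inequalities at once: a Gagliardo--Nirenberg extremal solves (\ref{e7.26}) by Weinstein's theorem \cite{Weinstein 1983}, while a Sobolev minimizer must be an Aubin--Talenti bubble (\ref{e7.27}) by \cite{Brezis-Nirenberg 1983}. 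After that, your assembly (Palais--Smale sequence with $P(u_n)\to 0$, the strict bound $c<m_a+\frac{1}{N}S^{\frac{N}{2}}$ via the glued truncated bubble of \cite{Wei-Wu 2021}, compactness below the bubbling threshold, $\lambda<0$, and virial instability) is precisely the paper's list of quoted Lemmas \ref{pro1.2}, \ref{pro1.3}, \ref{lem7.1}, \ref{pro1.4} and the instability argument of \cite{Jeanjean-Le}. The one genuine divergence is the compactness mechanism inside the key lemma. The paper first invokes Lemma \ref{pro1.3} to replace the minimizing sequence by a radial one, since $\inf_{\mathcal{P}_{a,-}}E=\inf_{\mathcal{P}_{a,-}\cap H_r^1(\mathbb{R}^N)}E$, so the compact embedding of $H_r^1(\mathbb{R}^N)$ into $L^q(\mathbb{R}^N)$ immediately yields a nonzero limit $u_0$ that is simultaneously a GN extremal and a Sobolev minimizer; you instead run a concentration-compactness/profile decomposition with translations on a general sequence. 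That route also works, but as written your trichotomy (vanishing / concentrating bubble / compact) skips the dichotomy case in which a nontrivial weak limit coexists with a concentrating bubble; to exclude it you need that Sobolev saturation together with the strict superadditivity of $s\mapsto s^{2^*/2}$ forces a single profile, so that either the bubble or the weak limit must vanish, and only then do your two exclusions via $\|u_n\|_q^q\not\to 0$ apply. (Also, your assertion that $q\gamma_q<2$ rules out both $t_n\to 0$ and $t_n\to\infty$ is slightly off: the upper bound uses $q\gamma_q<2$, but the lower bound $t_n\geq\rho_0+o_n(1)$ comes from the Sobolev term with exponent $2^*/2>1$.) Once these points are added your argument closes, and it has the mild advantage of not needing the radial identification of Lemma \ref{pro1.3}, which the paper gets for free from \cite{Jeanjean-Le}; the paper's radial route is correspondingly shorter and more elementary.
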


Now we consider the case $q=2+\frac{4}{N}$. In this case,
\cite{Soave JFA} recently obtained that (\ref{e1.11}) admits a
ground state if $\mu a^{\frac{q(1-\gamma_q)}{2}}<\bar{a}_N$, where
$\bar{a}_N:=\frac{q}{2C_{N,q}^q}$ and $C_{N,q}$ is defined in
\ref{e1.14},
 while if $\mu a^{\frac{q(1-\gamma_q)}{2}}\geq \bar{a}_N$,
\cite{Wei-Wu 2021} obtained that (\ref{e1.11}) has no ground states
by showing that $c^{po}:=\inf_{v\in \mathcal{P}_a}E(v)=0$.
Precisely, \cite{Wei-Wu 2021} obtained:

\begin{theorem}\label{thm1.9}
Let $N\geq 3$, $q=2+\frac{4}{N}$, $\mu>0$, $a>0$ and $\mu
a^{\frac{q(1-\gamma_q)}{2}}\geq \bar{a}_N$. Then $c^{po}=0$.
Moreover, $c^{po}$ can not be attained and (\ref{e1.11}) has no
ground states.
\end{theorem}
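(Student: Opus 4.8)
The plan is to reduce everything to a single scaling identity made available by the borderline exponent: since $q=2+\tfrac4N$ forces $q\gamma_q=2$ and hence $\mu\gamma_q=\tfrac{2\mu}{q}$. Writing $A:=\|\nabla u\|_2^2$, $B:=\tfrac{2\mu}{q}\|u\|_q^q$ and $C:=\|u\|_{2^*}^{2^*}$, so that $P(u)=A-B-C$, I would first compute the combination $E(u)-\tfrac1{2^*}P(u)=\tfrac1N A-\tfrac{2\mu}{Nq}\|u\|_q^q=\tfrac1N(A-B)$; evaluating on $\mathcal{P}_a$, where $P(u)=0$ gives $A-B=C$, yields
\[
E(u)=\tfrac1N\|u\|_{2^*}^{2^*}\qquad\text{for all }u\in\mathcal{P}_a .
\]
In particular $E>0$ on $\mathcal{P}_a$, so $c^{po}\ge0$ and the value $0$ can never be realized on $\mathcal{P}_a$; the two remaining tasks are to prove $c^{po}=0$ and to rule out ground states.

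For the upper bound I would use the fiber map $\Psi_u$ from (\ref{e6.6}). For $u\in S_a$ with $A>B$ the equation $P(u_\tau)=0$ has the unique root $\tau_*$ with $\tau_*^{2^*-2}=(A-B)/C$, and the corresponding Pohozaev point $u_{\tau_*}\in\mathcal{P}_{a}$ carries energy
\[
E(u_{\tau_*})=\tfrac1N\,(A-B)^{N/2}\,C^{-(N-2)/2}.
\]
Thus $c^{po}=\tfrac1N\inf\{(A-B)^{N/2}C^{-(N-2)/2}:u\in S_a,\ A>B\}$, and it suffices to produce a sequence along which the balance $A-B\to0^+$ while $C$ stays bounded above and below. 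Here the hypothesis enters through the sharp Gagliardo--Nirenberg inequality (\ref{e1.14}): on $S_a$ one has $B(u)\le \tfrac{\mu a^{q(1-\gamma_q)/2}}{\bar a_N}A(u)$ with equality only at the extremals, so $\mu a^{q(1-\gamma_q)/2}\ge\bar a_N$ says precisely that $\sup_{S_a}B/A\ge1$. Since $B/A$ is continuous on the connected set $S_a$ and, by the hypothesis, attains or exceeds the value $1$ while also taking values arbitrarily close to $0$, the intermediate value theorem furnishes $u_0\in S_a$ with $A(u_0)=B(u_0)$ and $C(u_0)\in(0,\infty)$ (in the borderline case $\mu a^{q(1-\gamma_q)/2}=\bar a_N$ one may simply take $u_0$ to be the Gagliardo--Nirenberg extremal). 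Perturbing $u_0$ slightly inside $S_a$ into the region $\{A>B\}$ produces $u_n\to u_0$ with $A(u_n)-B(u_n)\to0^+$ and $C(u_n)\to C(u_0)>0$, whence $E((u_n)_{\tau_*})\to0$ and $c^{po}=0$.

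Finally I would draw the two conclusions. Non-attainment of $c^{po}$ is immediate: any minimizer would lie in $\mathcal{P}_a$ with $E=\tfrac1N\|u\|_{2^*}^{2^*}>0$, contradicting $c^{po}=0$. For the absence of ground states, every normalized solution satisfies the Pohozaev identity $P(u)=0$, hence lies in $\mathcal{P}_a$ with $E(u)=\tfrac1N\|u\|_{2^*}^{2^*}>0$; since in this variational setting the ground state level coincides with $\inf_{\mathcal{P}_a}E=c^{po}$, the Pohozaev set being a natural constraint, a ground state would have to attain $c^{po}=0$, which is impossible. Therefore $\mathcal{G}_a=\emptyset$.

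The main obstacle is the upper bound $c^{po}\le0$: one must approach zero balance $A-B\to0^+$ from inside $\{A>B\}$ while keeping $\|u\|_{2^*}^{2^*}$ away from $0$ and $\infty$. The delicate point is the strict regime $\mu a^{q(1-\gamma_q)/2}>\bar a_N$, where the Gagliardo--Nirenberg extremal itself has negative balance ($A<B$) and so cannot be used directly; locating a zero-balance function via the intermediate value argument, and verifying that it can be perturbed into $\{A>B\}$ with controlled critical norm, is the crux of the construction.
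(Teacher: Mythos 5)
Your core argument is essentially the paper's own. The identity $E(u)=\frac{1}{N}\|u\|_{2^*}^{2^*}$ on $\mathcal{P}_a$, the unique zero $\tau_*$ of $P(u_\tau)$ when $A>B$ with $E(u_{\tau_*})=\frac{1}{N}(A-B)^{N/2}C^{-(N-2)/2}$, and the resulting reduction to producing $u_n\in S_a$ with $A_n>B_n$ and $(A_n-B_n)/\|u_n\|_{2^*}^2\to 0$ is exactly the paper's condition (\ref{e3.1}) in Lemmas \ref{lem3.1}--\ref{lem3.2}. Your construction differs only in packaging. In the borderline case $\mu a^{q(1-\gamma_q)/2}=\bar a_N$ the paper truncates the Gagliardo--Nirenberg extremal by a cutoff $\varphi(x/n)$ and renormalizes the mass; this is the concrete realization of your ``perturb $u_0$ into $\{A>B\}$'', and it is legitimate there precisely because in that case $A\geq B$ on all of $S_a$ with equality only at extremals, so any non-extremal approximation automatically lies in $\{A>B\}$. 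In the strict case the paper prescribes the GN-quotient values $f(u_n)=(\mu\gamma_q+A_n)a^{q(1-\gamma_q)/2}$ with $A_n\to 0^+$ and normalizes $\|u_n\|_2^2=a$, $\|u_n\|_q=1$ by explicit scaling, so that $A_n-B_n=A_n\to 0^+$ while $\|u_n\|_{2^*}$ is bounded below by interpolation; your intermediate-value argument on $B/A$ achieves the same one-sided approach to zero balance (both versions tacitly use the same range/connectedness fact about the GN quotient, so neither is more rigorous on that score). One small repair on your side: for an arbitrary zero $u_0$ of $A-B$ it is not automatic that $u_0$ lies in the closure of $\{A>B\}$ (it could a priori be a local maximum of $A-B$ at level $0$); the fix is to take $u_0$ as the first crossing along your IVT path starting in $\{A>B\}$ and let the approximants be points of the path just before the crossing --- or dispense with $u_0$ entirely and pick path points where $A-B=1/n$.

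The genuine soft spot is your last step. From ``$c^{po}=0$ is not attained'' you infer ``no ground states'' by asserting that the ground-state level coincides with $\inf_{\mathcal{P}_a}E$ because $\mathcal{P}_a$ is a natural constraint. Under Definition \ref{def1.1} ($c_a^g$ is the infimum of $E$ over critical points of $E|_{S_a}$) only the inequality $c_a^g\geq c^{po}$ comes for free, since every solution lies in $\mathcal{P}_a$; the natural-constraint property delivers the reverse inequality $c_a^g\leq c^{po}$ only through an \emph{attained} minimizer of $c^{po}$ which is then shown to be a solution --- and attainment is exactly what fails here. So your one-line identification is unjustified at the only point where it is needed: a priori there could exist solutions, all with $E=\frac{1}{N}\|u\|_{2^*}^{2^*}>0$, whose least-energy member would be a ground state at a strictly positive level without contradicting $c^{po}=0$. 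Note that the paper does not prove this step either; it explicitly defers non-attainment and nonexistence to Proposition 3.2(2) of Wei--Wu \cite{Wei-Wu 2021}, where the additional argument ruling this scenario out is carried out. Your non-attainment claim itself ($E>0$ on $\mathcal{P}_a$) is correct and matches; it is only the passage from non-attainment of $c^{po}$ to $\mathcal{G}_a=\emptyset$ that needs either Wei--Wu's argument or an honest proof of $c_a^g\leq c^{po}$.
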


In the proof of Theorem \ref{thm1.9}, a key step is to show that
$c^{po}=0$. In \cite{Wei-Wu 2021}, the proof of $c^{po}=0$ depends
on  the monotonicity of $c^{po}$ with $\mu$ (see Lemma 3.3 in
\cite{Wei-Wu 2021}). In this paper, we will give a different proof
by choosing appropriate functions but without using the monotonicity
of $c^{po}$ (see Lemma \ref{lem3.2}). Our methods used to prove
$c^{po}=0$ has some similarities with the methods used to prove
$\inf_{u\in \mathcal{P}_{a,-}}E(u)>0$ in Theorem \ref{thm1.8} (see
Lemmas \ref{lem7.21} and \ref{lem3.2}). It seems to be two sides of
a question.

\medskip

This paper is organized as follows. In Section 2, we  give the proof
of Theorem \ref{thm1.8}. Section 3 is devoted to the proof of
Theorem \ref{thm1.9}.

\medskip

\textbf{Notation}: In this paper, it is understood that all
functions, unless otherwise stated, are complex valued, but for
simplicity we write $L^r(\mathbb{R}^N)$, $H^1(\mathbb{R}^N)$,
$D^{1,2}(\mathbb{R}^N)$. For $1\leq r<\infty$, $L^r(\mathbb{R}^N)$
is the usual Lebesgue space endowed with the norm
$\|u\|_r^r:=\int_{\mathbb{R}^N}|u|^rdx$, $H^{1}(\mathbb{R}^N)$
 is the usual Sobolev space endowed with the
norm $\|u\|_{H^{1}}^2:=\|\nabla u\|_2^2+\|u\|_2^2$, and
$D^{1,2}(\mathbb{R}^N):=\left\{u\in L^{2^*}(\mathbb{R}^N):|\nabla
u|\in L^{2}(\mathbb{R}^N)\right\}$. $H_{r}^1(\mathbb{R}^N)$ denotes
the subspace of functions in $H^1(\mathbb{R}^N)$ which are radially
symmetric with respect to zero.  $S_{a,r}:=S_a\cap
H_{r}^1(\mathbb{R}^N)$.

\section{Proof of Theorem \ref{thm1.8}}
\setcounter{section}{2} \setcounter{equation}{0}

We use the strategy of \cite{Jeanjean-Le} to prove Theorem
\ref{thm1.8}. Since many results obtained in \cite{Jeanjean-Le} can
be extended to the case $\mu a^{\frac{q(1-\gamma_q)}{2}}\leq
(2K)^{\frac{q\gamma_q-2^*}{2^*-2}}$ directly, we just list them
without proofs and concentrate on proving the new result (Lemma
\ref{lem7.21}).

\begin{lemma}\label{lem6.6}
(Lemma 2.4, \cite{Jeanjean-Le}) Let the assumptions in Theorem
\ref{thm1.8} hold. Then for every $u\in S_a$, the function
$\Psi_u(\tau)$ defined in (\ref{e6.6}) has exactly two critical
points $\tau_u^+$ and $\tau_u^-$ with $0<\tau_u^+<\tau_u^-$.
Moreover:

(1) $\tau_u^+$ is a local minimum point for $\Psi_u(\tau)$,
$E(u_{\tau_u^+})<0$ and $u_{\tau_u^+}\in V_a$.

(2) $\tau_u^-$ is a global maximum point for $\Psi_u(\tau)$,
$\Psi_u'(\tau)<0$ for $\tau>\tau_u^-$ and
\begin{equation*}
E(u_{\tau_u^-})\geq \inf_{u\in \partial V_a}E(u)\geq 0.
\end{equation*}
In particular, if $\mu a^{\frac{q(1-\gamma_q)}{2}}<
(2K)^{\frac{q\gamma_q-2^*}{2^*-2}}$, then $\inf_{u\in \partial
V_a}E(u)> 0$.

(3) $\Psi_u''(\tau_u^-)<0$ and the maps $u\in S_a\mapsto \tau_u^-\in
\mathbb{R}$ is of class $C^1$.
\end{lemma}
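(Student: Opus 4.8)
The plan is to reduce the analysis of $\Psi_u$ to a single scalar function. Setting $A:=\|\nabla u\|_2^2$, $B:=\|u\|_{2^*}^{2^*}$, $C:=\|u\|_q^q$ (all positive on $S_a$), differentiation of (\ref{e6.6}) gives $\Psi_u'(\tau)=\tau^{q\gamma_q-1}h(\tau)$ with $h(\tau):=A\tau^{2-q\gamma_q}-B\tau^{2^*-q\gamma_q}-\mu\gamma_q C$, so the critical points of $\Psi_u$ on $(0,\infty)$ coincide with the zeros of $h$. Since $0<q\gamma_q<2<2^*$ for $q\in(2,2+\tfrac4N)$, one has $h(0^+)=-\mu\gamma_q C<0$ and $h(\tau)\to-\infty$; moreover $h'(\tau)=\tau^{1-q\gamma_q}\bigl(A(2-q\gamma_q)-B(2^*-q\gamma_q)\tau^{2^*-2}\bigr)$ vanishes at a single $\tau_*>0$, changing from $+$ to $-$. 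Thus $h$ increases on $(0,\tau_*)$, decreases on $(\tau_*,\infty)$, and has at most two zeros, exactly two (straddling $\tau_*$) precisely when $h(\tau_*)=\max_{\tau>0}h(\tau)>0$. Everything in the lemma follows once this strict positivity is established for every $u\in S_a$.

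This strict positivity is the step I expect to be the main obstacle, chiefly because the hypothesis is allowed to be an \emph{equality}. Replacing $B$ and $C$ by their upper bounds from (\ref{e1.13}) and (\ref{e1.14}) produces $\bar h(\tau):=A\tau^{2-q\gamma_q}-S^{-2^*/2}A^{2^*/2}\tau^{2^*-q\gamma_q}-\mu\gamma_q C_{N,q}^qa^{q(1-\gamma_q)/2}A^{q\gamma_q/2}$ with $h(\tau)\ge\bar h(\tau)$ for all $\tau>0$, and a direct computation identifies $\bar h(\tau)=\tau^{1-q\gamma_q}\frac{d}{d\tau}\bigl(\rho f_{\mu,a}(\rho)\bigr)\big|_{\rho=\tau^2A}$, linking $\bar h$ to the function $f_{\mu,a}$ of the introduction. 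Evaluating at the point $\tau_0$ with $\tau_0^2A=\rho_{\mu,a}$ and using $\frac{d}{d\rho}\bigl(\rho f_{\mu,a}\bigr)(\rho_{\mu,a})=f_{\mu,a}(\rho_{\mu,a})=\max_{\rho>0}f_{\mu,a}\ge 0$ (valid exactly on $\Omega_1\cup\Omega_2$) gives $\bar h(\tau_0)\ge 0$. The crucial remaining point is that $h>\bar h$ strictly at $\tau_0$: an equality there would force simultaneous equality in the Sobolev inequality (\ref{e1.13}) and the Gagliardo--Nirenberg inequality (\ref{e1.14}), which is impossible because their extremals differ (and for $N\le 4$ the Aubin--Talenti bubble is not even in $H^1(\mathbb{R}^N)$). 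Hence $\max_{\tau>0}h\ge h(\tau_0)>\bar h(\tau_0)\ge 0$, which covers the boundary case $\mu a^{q(1-\gamma_q)/2}=(2K)^{(q\gamma_q-2^*)/(2^*-2)}$ as well; on $\Omega_1$ one in fact has $\bar h(\tau_0)>0$ since there $f_{\mu,a}(\rho_{\mu,a})>0$.

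Granted $h(\tau_*)>0$, the two zeros $\tau_u^+<\tau_*<\tau_u^-$ exist and are unique, and the sign of $\Psi_u'=\tau^{q\gamma_q-1}h$ (negative, then positive, then negative) makes $\tau_u^+$ a local minimum and $\tau_u^-$ a global maximum, with $\Psi_u'<0$ for $\tau>\tau_u^-$. Because $\Psi_u(\tau)\to 0$ as $\tau\to0^+$ and $\Psi_u$ decreases on $(0,\tau_u^+)$, we get $E(u_{\tau_u^+})=\Psi_u(\tau_u^+)<0$; then $u_{\tau_u^+}\in V_a$, for otherwise the ray $\tau\mapsto u_\tau$ would meet $\partial V_a$ at some $\tau'\le\tau_u^+$ with $\Psi_u(\tau')<0$, contradicting $E(u_{\tau'})\ge\inf_{\partial V_a}E\ge 0$. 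For part (2), if $\|\nabla u\|_2^2=\rho_0$ then $E(u)\ge\rho_0 f_{\mu,a}(\rho_0)\ge 0$ (with $f_{\mu,a}(\rho_0)=0$ on $\Omega_2$ and $f_{\mu,a}(\rho_0)>f_{\mu,a_0}(\rho_0)=0$ on $\Omega_1$), so $\inf_{\partial V_a}E\ge0$; and choosing $\tau_1$ with $\tau_1^2A=\rho_0$, so that $u_{\tau_1}\in\partial V_a$, yields $E(u_{\tau_u^-})=\max_{\tau>0}\Psi_u(\tau)\ge\Psi_u(\tau_1)\ge\inf_{\partial V_a}E$.

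Finally, for part (3), since $h(\tau_u^-)=0$ we have $\Psi_u''(\tau_u^-)=(\tau_u^-)^{q\gamma_q-1}h'(\tau_u^-)<0$, because $\tau_u^->\tau_*$ lies strictly in the region where $h'<0$. This non-degeneracy lets me apply the implicit function theorem to $F(u,\tau):=\Psi_u'(\tau)$, which is $C^1$ in $(u,\tau)$ since $u\mapsto(\|\nabla u\|_2^2,\|u\|_{2^*}^{2^*},\|u\|_q^q)$ is $C^1$ on $H^1(\mathbb{R}^N)$: from $F(u,\tau_u^-)=0$ and $\partial_\tau F(u,\tau_u^-)=\Psi_u''(\tau_u^-)\ne0$ it follows that $u\mapsto\tau_u^-$ is of class $C^1$ on $S_a$.
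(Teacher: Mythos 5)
Your proof is correct, but note that the paper itself offers no proof of this lemma: it cites Lemma 2.4 of \cite{Jeanjean-Le} and asserts that the arguments there extend ``word by word'' to the boundary case $\mu a^{\frac{q(1-\gamma_q)}{2}}=(2K)^{\frac{q\gamma_q-2^*}{2^*-2}}$. You correctly located the one point where the extension is \emph{not} literally verbatim: in the strict case the existence of two zeros of $h$ follows from $\max_\rho f_{\mu,a}>0$, whereas on $\Omega_2$ one has $\max_\rho f_{\mu,a}=0$, so the comparison function only yields $\bar h(\tau_0)\ge 0$ and the two zeros could a priori merge into a degenerate one. Your resolution --- the strict inequality $h(\tau_0)>\bar h(\tau_0)$, because no $u\in H^1(\mathbb{R}^N)\setminus\{0\}$ can be simultaneously an extremal of the Sobolev inequality (\ref{e1.13}) and of the Gagliardo--Nirenberg inequality (\ref{e1.14}) (Aubin--Talenti bubbles fail to lie in $H^1$ for $N\le 4$ and in any dimension decay polynomially, hence are never the exponentially decaying Weinstein ground state) --- is precisely the mechanism the paper deploys later, in compactness form, to prove Lemma \ref{lem7.21}: there a minimizing sequence is shown to converge to a function that would have to be both of the bubble form (\ref{e7.27}) and a ground state of (\ref{e7.26}), a contradiction. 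So your argument is a pointwise, non-compactness version of the paper's key new idea, and it in fact substantiates the paper's otherwise unproved claim that Lemma \ref{lem6.6} holds on all of $\Omega_1\cup\Omega_2$; note the pointwise strictness suffices here but would not suffice for Lemma \ref{lem7.21}, which needs positivity of an infimum.

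One ordering issue in your write-up, which is not a genuine gap: you declare $\tau_u^-$ a global maximum from the sign pattern of $\Psi_u'$ alone, but since $\Psi_u(\tau)\to 0^-$ as $\tau\to 0^+$, globality on $(0,\infty)$ additionally requires $\Psi_u(\tau_u^-)\ge 0$, which you only derive afterwards (and only with a non-strict inequality, via a chain that itself presumes $\tau_u^-$ is the global maximizer). The fix is already contained in your own strictness observation: at the crossing time $\tau_1$ with $\tau_1^2\|\nabla u\|_2^2=\rho_0$ one has $\Psi_u(\tau_1)>\rho_0 f_{\mu,a}(\rho_0)\ge 0$ strictly (no simultaneous extremal), and since $\Psi_u<0$ on $(0,\tau_u^+]$ this forces $\tau_1>\tau_u^+$, whence $\Psi_u(\tau_u^-)=\max_{[\tau_u^+,\infty)}\Psi_u\ge\Psi_u(\tau_1)>0$ and the global maximum at positive level follows. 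With that reordering, the remaining steps --- the reduction to the scalar function $h$, the crossing argument giving $u_{\tau_u^+}\in V_a$, the identity $\Psi_u''(\tau_u^-)=(\tau_u^-)^{q\gamma_q-1}h'(\tau_u^-)<0$, and the implicit function theorem for the $C^1$ dependence $u\mapsto\tau_u^-$ --- are all sound and match the standard Soave-type fiber analysis.
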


\begin{lemma}\label{lem6.4}
(Lemma 2.6, \cite{JEANJEAN-JENDREJ}) Let  $N\geq 3$,
$q\in(2,2+\frac{4}{N})$, $\mu>0$, $a\in (0,a_0]$. Then for any
$a_1\in (0, a)$, we have $ m_a\leq m_{a_1}+m_{a-a_1}$, and if
$m_{a_1}$ or $m_{a-a_1}$ is reached then the inequality is strict.
\end{lemma}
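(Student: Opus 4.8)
The plan is to deduce the subadditivity from the single scaling estimate
$$m_{\theta b}\le\theta\,m_b\quad\text{for all }\theta\ge1\text{ with }\theta b\le a_0,$$
with strict inequality when $\theta>1$ and $m_b$ is attained; equivalently, that $t\mapsto m_t/t$ is non-increasing on $(0,a_0]$. Granting this, write $a=a_1+(a-a_1)$ and apply the estimate with $(b,\theta)=(a_1,a/a_1)$ and with $(b,\theta)=(a-a_1,a/(a-a_1))$ (both admissible since $a\le a_0$) to get $m_{a_1}\ge\frac{a_1}{a}m_a$ and $m_{a-a_1}\ge\frac{a-a_1}{a}m_a$; adding these and using $a_1+(a-a_1)=a$ yields $m_{a_1}+m_{a-a_1}\ge m_a$. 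If $m_{a_1}$ (or $m_{a-a_1}$) is attained, the corresponding scaling estimate is strict, so the sum is strict.

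First I would record the reformulation $m_b=\inf_{u\in S_b}\Psi_u(\tau_u^+)$, which follows from Lemma \ref{lem6.6}: the fibre-minimum point $u_{\tau_u^+}$ always lies in $V_b$, giving ``$\ge$'', while any $v\in V_b$ with $E(v)<0$ satisfies $1<\tau_v^-$ (because $E$ is non-negative on $\partial V_b$ whereas $E(v)<0$), whence $E(v)=\Psi_v(1)\ge\Psi_v(\tau_v^+)$, giving ``$\le$''; recall $m_b<0$, so only the set $\{E<0\}$ matters. I would also note that $\Psi$ is dilation-covariant, so every competitor may be normalized to sit at its own fibre minimum, i.e. $\tau_u^+=1$, $P(u)=0$, $u\in V_b$.

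The core of the scaling estimate is the pointwise comparison of fibres under mass scaling. For $u\in S_b$ and $\theta>1$ set $\tilde u:=\sqrt\theta\,u\in S_{\theta b}$. A direct computation of the three norms gives, for every $\tau>0$,
$$\Psi_{\tilde u}(\tau)-\theta\,\Psi_u(\tau)=-\frac{\tau^{2^*}}{2^*}\bigl(\theta^{2^*/2}-\theta\bigr)\|u\|_{2^*}^{2^*}-\frac{\mu\tau^{q\gamma_q}}{q}\bigl(\theta^{q/2}-\theta\bigr)\|u\|_q^q<0,$$
since $2^*/2>1$ and $q/2>1$ force $\theta^{2^*/2}>\theta$ and $\theta^{q/2}>\theta$. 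Normalizing $u$ so that $\tau_u^+=1$ and taking $\tau=1$, I would conclude $m_{\theta b}\le\Psi_{\tilde u}(\tau_{\tilde u}^+)\le\Psi_{\tilde u}(1)<\theta\,\Psi_u(1)=\theta\,\Psi_u(\tau_u^+)$; passing to the infimum over $u\in S_b$ (each normalized so that $\tau_u^+=1$) and using $m_b=\inf_u\Psi_u(\tau_u^+)$ gives $m_{\theta b}\le\theta\,m_b$, the middle inequality being strict whenever $u$ attains $m_b$, which yields the strict version.

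The step that is not automatic --- and the main obstacle --- is the inequality $m_{\theta b}\le\Psi_{\tilde u}(\tau_{\tilde u}^+)\le\Psi_{\tilde u}(1)$, because $\Psi_{\tilde u}(\tau_{\tilde u}^+)$ is only a \emph{local} minimum of $\Psi_{\tilde u}$ (the fibre tends to $-\infty$), so it dominates $\Psi_{\tilde u}(1)$ only when $1$ lies in the left basin, i.e. $1<\tau_{\tilde u}^-$. This is precisely the statement that the mass-scaled function has not been pushed past the Sobolev barrier out of $B_{\rho_0}$. I would settle it by a connectedness argument in $\theta$: from $P(u)=0$ one computes $\Psi_{\tilde u}'(1)=P(\tilde u)=(\theta-\theta^{2^*/2})\|u\|_{2^*}^{2^*}+\mu\gamma_q(\theta-\theta^{q/2})\|u\|_q^q<0$ for all $\theta>1$, so $\Psi_{\tilde u}'(1)$ never vanishes; since $\tau_{\tilde u}^\pm$ are critical points of $\Psi_{\tilde u}$ depending continuously on $\theta$ (Lemma \ref{lem6.6}(3)) and $\tau_{\tilde u}^->1$ as $\theta\to1^+$, the quantity $\tau_{\tilde u}^--1$ stays positive for all admissible $\theta>1$ (it cannot cross $0$ without forcing $\Psi_{\tilde u}'(1)=0$). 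Hence $1<\tau_{\tilde u}^-$ throughout and the comparison is justified. The remaining verifications --- continuity of $\tau_{\tilde u}^\pm$ and the elementary norm computations --- are routine and follow the argument of \cite{JEANJEAN-JENDREJ}.
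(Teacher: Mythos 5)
Your proof is correct, and at the top level it follows the same skeleton as the proof the paper cites (Lemma 2.6 of \cite{JEANJEAN-JENDREJ}): both reduce the subadditivity to the homogeneity estimate $m_{\theta b}\le\theta\,m_b$ for $\theta>1$ with $\theta b\le a_0$, strict when $m_b$ is attained, and then combine the two instances exactly as you do. Where you genuinely diverge is in the implementation of that estimate. In \cite{JEANJEAN-JENDREJ} one takes a negative-energy almost-minimizer $u\in V_b$ and applies a mass-changing rescaling of the form $u(\theta^{-1/N}\cdot)$, verifying \emph{directly} that the rescaled function still lies in $B_{\rho_0}$ (this is where the elementary analysis of the function $f_{\mu,b}$ enters) and that its energy is strictly below $\theta E(u)$; no fiber analysis is needed. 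You instead scale by $\tilde u=\sqrt\theta\,u$, which in general does leave $B_{\rho_0}$, and you repair this by routing everything through the fiber machinery of Lemma \ref{lem6.6}: the reformulation $m_b=\inf_{u\in S_b}\Psi_u(\tau_u^+)$, the pointwise comparison $\Psi_{\tilde u}(\tau)<\theta\Psi_u(\tau)$, and the continuation-in-$\theta$ argument giving $\tau_{\tilde u}^->1$ (via $\Psi_{\tilde u}'(1)=P(\tilde u)<0$ for all $\theta>1$ plus continuity of $\tau^-$). You correctly isolated the one dangerous step --- that $\Psi_{\tilde u}(\tau_{\tilde u}^+)\le\Psi_{\tilde u}(1)$ fails if $1$ slides past $\tau_{\tilde u}^-$ --- and your resolution is sound. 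Two points should be made explicit in a full write-up: (i) in your reformulation, $1<\tau_v^-$ for $v\in V_b$ with $E(v)<0$ holds because otherwise $\Psi_v$ is strictly decreasing on $(\tau_v^-,\infty)$, so the path $v_\tau$ would exit $B_{\rho_0}$ at negative energy, contradicting $\inf_{\partial V_b}E\ge 0$; note that only $\ge 0$ is needed, so the argument survives at the borderline mass $\mu a^{q(1-\gamma_q)/2}=(2K)^{(q\gamma_q-2^*)/(2^*-2)}$, where $\inf_{\partial V_a}E$ may vanish --- which is precisely the regime this paper adds; (ii) Lemma \ref{lem6.6}(3) literally gives $C^1$-dependence of $\tau_u^-$ only along a fixed sphere $S_a$, whereas your path $\theta\mapsto\sqrt\theta\,u$ changes the mass, so the continuity you invoke is not a verbatim citation; it does follow from the same implicit-function argument because $\Psi_{\tilde u}''(\tau_{\tilde u}^-)<0$ at every admissible mass, but this should be said. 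In sum, your route trades the elementary but ball-membership-sensitive rescaling estimate of \cite{JEANJEAN-JENDREJ} for the Jeanjean--Le fiber analysis that the paper has available anyway, and it applies uniformly up to $a=a_0$; the original route is shorter but requires the separate verification that the rescaled competitor stays in $V_{\theta b}$.
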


Now we set
\begin{equation*}
M_{r}(a):=\inf_{g\in \Gamma_{r}(a)}\max_{t\in [0,\infty)}E(g(t)),
\end{equation*}
where
\begin{equation*}
\Gamma_{r}(a):=\{g\in C([0,\infty),S_{a,r}):g(0)\in
\mathcal{P}_{a,+}, \exists t_{g}\ s.t.\ g(t)\in E_{2m_{a}}\
\mathrm{for}\ t\geq t_{g}\}
\end{equation*}
with $E_{c}:=\{u\in H^1(\mathbb{R}^N): E(u)<c\}$. Then we have

\begin{lemma}\label{pro1.2}
(Proposition 1.10, \cite{Jeanjean-Le})  Let the assumptions in
Theorem \ref{thm1.8} hold. Then there exists a Palais-Smale sequence
$\{u_n\}\subset S_{a,r}$ for $E|_{S_a}$ at level $M_{r}(a)$, with
$P(u_n)\to 0$ as $n\to \infty$.
\end{lemma}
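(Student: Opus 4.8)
The plan is to construct the Palais--Smale sequence at the mountain pass level $M_r(a)$ by combining the min-max structure on the radial sphere $S_{a,r}$ with the fiber-map analysis of Lemma \ref{lem6.6}, using an augmented functional on $\mathbb{R}\times S_{a,r}$ to control the Pohozaev quantity $P(u_n)$. First I would verify that $M_r(a)$ is a genuine mountain pass level, i.e. that $\Gamma_r(a)$ is nonempty and that $M_r(a)\geq \inf_{u\in\partial V_a}E(u)>0$ separates the two endpoint regions. Nonemptiness follows from the fiber map: starting from any $u\in S_{a,r}$, the curve $\tau\mapsto u_\tau$ passes through $\mathcal{P}_{a,+}$ at $\tau_u^+$ and, by Lemma \ref{lem6.6}(2), satisfies $\Psi_u'(\tau)<0$ for $\tau>\tau_u^-$, so $E(u_\tau)\to-\infty$ and eventually enters $E_{2m_a}$; reparametrizing gives an admissible path. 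The geometric separation is exactly the content of $\inf_{\partial V_a}E\geq0$ together with $m_a<0$.

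The key device is to pass to the auxiliary functional $\tilde{E}(s,u):=E(u_{e^s})$ on the product $\mathbb{R}\times S_{a,r}$, following the Jeanjean scaling trick. A mountain pass scheme applied to $\tilde{E}$ produces a Palais--Smale sequence $(s_n,u_n)$ at level $M_r(a)$; writing $w_n:=(u_n)_{e^{s_n}}$, the vanishing of the $s$-derivative of $\tilde{E}$ is precisely the statement $P(w_n)\to0$, while the vanishing of the $u$-derivative (after transporting back) yields that $\{w_n\}$ is a Palais--Smale sequence for $E|_{S_a}$. One then checks that $s_n$ stays bounded, so that $w_n$ and $u_n$ are comparable and the sequence can be taken directly on $S_{a,r}$; boundedness of $s_n$ comes from the fact that along the deformation the maximizing $\tau$ is uniformly controlled through the $C^1$ dependence in Lemma \ref{lem6.6}(3). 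The radial setting is preserved throughout because the scaling $u\mapsto u_\tau$ maps $H^1_r$ to itself and the deformation flow can be carried out within $S_{a,r}$.

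I would implement the deformation via an Ekeland-type or quantitative deformation argument on the constraint manifold $\mathbb{R}\times S_{a,r}$, using that $E|_{S_a}$ is $C^1$ and that the tangent-space gradient is well defined. The main obstacle I expect is the simultaneous control of both components of the Palais--Smale condition while keeping the sequence radial and inside the correct sublevel structure: one must ensure that the deformation used to extract the min-max sequence does not destroy the endpoint constraints defining $\Gamma_r(a)$ (namely $g(0)\in\mathcal{P}_{a,+}$ and $g(t)\in E_{2m_a}$ for large $t$), and that the extra $s$-variable does not escape to infinity. Verifying the boundedness of $\{s_n\}$ and translating the criticality of $\tilde{E}$ into the pair ``$E|_{S_a}$-Palais--Smale plus $P\to0$'' is the technical heart; since this is precisely Proposition 1.10 of \cite{Jeanjean-Le} and the hypotheses extend verbatim to the boundary case $\mu a^{\frac{q(1-\gamma_q)}{2}}\leq(2K)^{\frac{q\gamma_q-2^*}{2^*-2}}$, the argument goes through word for word once Lemma \ref{lem6.6} is in hand.
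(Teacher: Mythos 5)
Your proposal follows essentially the same route as the paper's source: the paper states this lemma without proof, citing Proposition 1.10 of \cite{Jeanjean-Le}, whose argument is precisely the Jeanjean augmented-functional scheme you describe ($\tilde E(s,u):=E(u_{e^s})$ on $\mathbb{R}\times S_{a,r}$, min-max over $\Gamma_r(a)$, with $\partial_s\tilde E\to 0$ translating into $P(u_n)\to 0$), and it carries over verbatim to the boundary case $\mu a^{\frac{q(1-\gamma_q)}{2}}=(2K)^{\frac{q\gamma_q-2^*}{2^*-2}}$ once Lemma \ref{lem6.6} is available. The only cosmetic difference is your handling of the $s$-variable: in the standard construction one restricts to augmented paths of the form $(0,g)$, for which the min-max levels coincide, and the quantitative deformation lemma then yields Palais--Smale pairs with $s_n\to 0$ automatically, so no separate boundedness argument via the $C^1$ map of Lemma \ref{lem6.6}(3) is needed.
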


Next we study the value of  $M_r(a)$. For this aim, we set
\begin{equation*}
M(a):=\inf_{g\in \Gamma(a)}\max_{t\in [0,\infty)}E(g(t)),
\end{equation*}
where
\begin{equation*}
\Gamma(a):=\{g\in C([0,\infty),S_{a}):g(0)\in V_a\cap E_{0}, \exists
t_{g}\ s.t.\ g(t)\in E_{2m_{a}}, t\geq t_{g}\}.
\end{equation*}

\begin{lemma}\label{pro1.3}
(Proposition 1.15 and Remark 5.1, \cite{Jeanjean-Le})  Let the
assumptions in Theorem \ref{thm1.8} hold. Then
\begin{equation*}
M_{r}(a)=M(a)=\inf_{\mathcal{P}_{a,-}}E(u)=\inf_{\mathcal{P}_{a,-}\cap
H_r^1(\mathbb{R}^N)}E(u).
\end{equation*}
\end{lemma}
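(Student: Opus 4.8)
The plan is to establish the fourfold equality through a chain of inequalities, reducing every quantity to $\inf_{\mathcal{P}_{a,-}}E$ and its radial counterpart. The organizing tool is the scaling identity $P(u_\tau)=\tau\Psi_u'(\tau)$, which exhibits the critical points of the fiber map $\Psi_u$ as precisely the dilations of $u$ lying on $\mathcal{P}_a$. Combined with Lemma \ref{lem6.6}, this shows $u\mapsto u_{\tau_u^-}$ maps $S_a$ onto $\mathcal{P}_{a,-}$ (the global maxima, at nonnegative level) and $u\mapsto u_{\tau_u^+}$ onto $\mathcal{P}_{a,+}$ (the local minima, at negative level), with $\mathcal{P}_{a,+}\subset V_a$ by Lemma \ref{lem6.6}(1). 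First I would record the elementary inclusion $\mathcal{P}_{a,+}\subset V_a\cap E_0$, whence $\Gamma_r(a)\subset\Gamma(a)$ and therefore $M(a)\le M_r(a)$ for free.

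Next come the upper bounds. Given $u\in\mathcal{P}_{a,-}$ (so $\tau_u^-=1$), I use the scaling ray $\tau\mapsto u_\tau$ as a test path: for small $\tau$ one has $u_\tau\in V_a\cap E_0$ since $\|\nabla u_\tau\|_2^2=\tau^2\|\nabla u\|_2^2\to0$ and $\Psi_u(\tau)\to0^-$; for large $\tau$, $\Psi_u(\tau)\to-\infty<2m_a$; and $\max_\tau\Psi_u(\tau)=\Psi_u(1)=E(u)$. Reparametrizing $\tau\in(0,\infty)$ onto $[0,\infty)$ gives a path in $\Gamma(a)$ with maximal energy $E(u)$, hence $M(a)\le\inf_{\mathcal{P}_{a,-}}E$. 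The same ray started instead at $\tau_u^+$ (so $g(0)=u_{\tau_u^+}\in\mathcal{P}_{a,+}$), kept radial when $u$ is radial, lies in $\Gamma_r(a)$ and yields $M_r(a)\le\inf_{\mathcal{P}_{a,-}\cap H_r^1}E$.

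The heart of the matter, and the step I expect to be the main obstacle, is the matching lower bound: every admissible path must cross $\mathcal{P}_{a,-}$. Here I would study the continuous function $t\mapsto\theta(t):=\tau_{g(t)}^-$ (continuous by Lemma \ref{lem6.6}(3)) and apply the intermediate value theorem to $\theta(t)-1$. At the start, $g(0)\in\mathcal{P}_{a,+}$ (radial case) gives $\theta(0)>\tau_{g(0)}^+=1$ at once; in the non-radial case $g(0)\in V_a\cap E_0$, so with $\tau_*:=(\rho_0/\|\nabla g(0)\|_2^2)^{1/2}>1$ one has $g(0)_{\tau_*}\in\partial V_a$, hence $\Psi_{g(0)}(\tau_*)\ge\inf_{\partial V_a}E\ge0$ while $\Psi_{g(0)}(1)=E(g(0))<0$; since $\{\Psi_{g(0)}\ge0\}$ is the closed interval around $\tau_{g(0)}^-$, this forces $1<\tau_{g(0)}^-$. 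At the far end, $E(g(t_g))<2m_a<m_a=\inf_{\overline{V_a}}E$ forces $\|\nabla g(t_g)\|_2^2>\rho_0$; the same hump analysis, now with $\tau_*<1$, gives $\theta(t_g)<1$. By continuity there is $t^*$ with $\tau_{g(t^*)}^-=1$, i.e. $g(t^*)\in\mathcal{P}_{a,-}$ (radial when the path is), so $\max_tE(g(t))\ge E(g(t^*))\ge\inf_{\mathcal{P}_{a,-}}E$; taking the infimum over paths gives $M(a)\ge\inf_{\mathcal{P}_{a,-}}E$ and $M_r(a)\ge\inf_{\mathcal{P}_{a,-}\cap H_r^1}E$. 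The delicate points are reading off the sign of $\theta-1$ at the two ends from the shape of $\Psi_u$ together with $m_a<0\le\inf_{\partial V_a}E$; whether the crossing level is strictly positive or zero is immaterial here, that finer question being exactly Lemma \ref{lem7.21}.

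Finally I would close the loop between the radial and non-radial infima by Schwarz symmetrization. For $u\in\mathcal{P}_{a,-}$, its symmetric rearrangement $u^*\in S_{a,r}$ preserves $\|u\|_2$, $\|u\|_q$, $\|u\|_{2^*}$ and satisfies $\|\nabla u^*\|_2\le\|\nabla u\|_2$, so $\Psi_{u^*}(\tau)\le\Psi_u(\tau)$ for every $\tau$; hence $E\big((u^*)_{\tau_{u^*}^-}\big)=\max_\tau\Psi_{u^*}(\tau)\le\max_\tau\Psi_u(\tau)=E(u)$, with $(u^*)_{\tau_{u^*}^-}\in\mathcal{P}_{a,-}\cap H_r^1$. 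This gives $\inf_{\mathcal{P}_{a,-}\cap H_r^1}E\le\inf_{\mathcal{P}_{a,-}}E$, while the reverse inequality is trivial by inclusion. Assembling all four estimates yields $M_r(a)=M(a)=\inf_{\mathcal{P}_{a,-}}E=\inf_{\mathcal{P}_{a,-}\cap H_r^1}E$.
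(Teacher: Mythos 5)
Your architecture is essentially the intended one: the paper itself offers no proof of this lemma (it is quoted from Proposition 1.15 and Remark 5.1 of \cite{Jeanjean-Le}, with the assertion that the arguments extend directly to the borderline case), and your reconstruction --- dilation rays as test paths for the upper bounds, the intermediate value theorem applied to $t\mapsto \tau_{g(t)}^-$ for the lower bounds, Schwarz symmetrization to match the radial and nonradial infima --- is the Jeanjean--Le scheme. The individual steps you carry out are correct: $\mathcal{P}_{a,+}\subset V_a\cap E_0$ (so $\Gamma_r(a)\subset\Gamma(a)$), the observation that $\Psi_u<0$ on $(0,\tau_u^+]$ so that $\Psi_{g(0)}(\tau_*)\geq \inf_{\partial V_a}E\geq 0$ pins $\tau_{g(0)}^->1$, and $E(g(t_g))<2m_a<m_a$ forcing $\|\nabla g(t_g)\|_2^2>\rho_0$ and $\tau_{g(t_g)}^-<1$ all check out.

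The genuine gap is your closing remark that ``whether the crossing level is strictly positive or zero is immaterial.'' It is exactly the material point in the only case the citation does not cover, namely $\mu a^{\frac{q(1-\gamma_q)}{2}}=(2K)^{\frac{q\gamma_q-2^*}{2^*-2}}$, where Lemma \ref{lem6.6}(2) yields only $\inf_{\partial V_a}E\geq 0$. Your crossing point satisfies $P(g(t^*))=0$ and $E(g(t^*))\geq 0$, but membership in $\mathcal{P}_{a,-}$ requires the strict inequality $E>0$; if some $u_0\in\mathcal{P}_a$ with $E(u_0)=0$ existed, the dilation ray through $u_0$ would itself be an admissible path with maximal energy $0$, forcing $M(a)=0$, and the asserted identity $M(a)=\inf_{\mathcal{P}_{a,-}}E$ would then be \emph{false} whenever $\inf_{\mathcal{P}_{a,-}}E>0$ --- which is precisely what Lemma \ref{lem7.21} later establishes. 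The same wrinkle infects your symmetrization step, where $(u^*)_{\tau_{u^*}^-}$ is only known to have $E\geq 0$. So a complete proof must rule out zero-level Poho\v{z}aev points, and this cannot be deferred to Lemma \ref{lem7.21} as stated, since that lemma is proved in the paper \emph{using} the present one. Fortunately the fix is short and non-circular: a single-function version of the argument of Lemma \ref{lem7.21} suffices. If $u\in S_a$ satisfies $P(u)=0$ and $E(u)=0$, inserting these identities into (\ref{e1.13}) and (\ref{e1.14}) (i.e., the system (\ref{e7.22}) with $A_n=0$) gives $\|\nabla u\|_2^{2-q\gamma_q}\leq \bigl(\tfrac{2^*(2-q\gamma_q)}{2(2^*-q\gamma_q)}S^{\frac{2^*}{2}}\bigr)^{\frac{2-q\gamma_q}{2^*-2}}$ and $\|\nabla u\|_2^{2^*-2}\geq \tfrac{2^*(2-q\gamma_q)}{2(2^*-q\gamma_q)}S^{\frac{2^*}{2}}$, hence $\|\nabla u\|_2^2=\rho_0$ with equality in both the Gagliardo--Nirenberg and the Sobolev inequalities; then $u$ would simultaneously be the Weinstein ground state of (\ref{e7.26}) and an Aubin--Talenti bubble (\ref{e7.27}), a contradiction. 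With that paragraph added at the crossing step (and at the symmetrization step), your proof is complete.
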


\begin{lemma}\label{lem7.21}
Let the assumptions in Theorem \ref{thm1.8} hold. Then $\inf_{u\in
\mathcal{P}_{a,-}}E(u)>0$.
\end{lemma}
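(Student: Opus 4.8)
The plan is to argue by contradiction, and to isolate the genuinely new case. Since the strict inequality $\mu a^{\frac{q(1-\gamma_q)}{2}}<(2K)^{\frac{q\gamma_q-2^*}{2^*-2}}$ is already covered by Theorem B (there $f_{\mu,a}$ has a strictly positive maximum, so the bound $E(u)\geq \|\nabla u\|_2^2 f_{\mu,a}(\|\nabla u\|_2^2)$ is bounded below away from $0$ on $\mathcal{P}_{a,-}$), the only new content is the boundary case $\mu a^{\frac{q(1-\gamma_q)}{2}}=(2K)^{\frac{q\gamma_q-2^*}{2^*-2}}$, i.e. $a=a_0$, where $f_{\mu,a_0}\leq 0$ with equality only at $\rho_0$ and the preceding estimate is useless. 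So I would suppose there is a sequence $\{u_n\}\subset\mathcal{P}_{a,-}$ with $E(u_n)\to 0$, and set $t_n:=\|\nabla u_n\|_2^2$, $A_n:=\|u_n\|_{2^*}^{2^*}$, $B_n:=\|u_n\|_q^q$.

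First I would record two exact relations valid on $\mathcal{P}_a$. From $P(u_n)=0$ one has $t_n=A_n+\mu\gamma_q B_n$, and inserting this into $E$ gives $E(u_n)=\frac1N A_n-c_q\mu B_n$ with $c_q:=\frac1q-\frac{\gamma_q}{2}>0$ (since $q\gamma_q<2$). Combining these with the Sobolev inequality (\ref{e1.13}) in the form $A_n\leq S^{-\frac{2^*}{2}}t_n^{\frac{2^*}{2}}$ and the Gagliardo--Nirenberg inequality (\ref{e1.14}) in the form $B_n\leq C_{N,q}^q a^{\frac{q(1-\gamma_q)}{2}}t_n^{\frac{q\gamma_q}{2}}$, I would first show that $t_n$ stays bounded away from $0$ and from $\infty$: if $t_n\to 0$ then $A_n=o(t_n)$, so Pohozaev forces $\mu\gamma_q B_n\sim t_n$ and hence $E(u_n)\sim -\frac{c_q}{\gamma_q}t_n<0$, contradicting $E(u_n)>0$; if $t_n\to\infty$ then $B_n=o(t_n)$, so $A_n\sim t_n$ and $E(u_n)\sim t_n/N\to\infty$, contradicting $E(u_n)\to 0$.

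Next I would solve the two identities for $A_n$ and $B_n$ in terms of $t_n$ and $E(u_n)$, insert the results into the Sobolev and Gagliardo--Nirenberg inequalities, pass to a subsequence with $t_n\to t_*\in(0,\infty)$, and use $E(u_n)\to 0$ to obtain in the limit
\[
\frac{Nc_q}{Nc_q+\gamma_q}\,t_*\leq S^{-\frac{2^*}{2}}t_*^{\frac{2^*}{2}}\quad\text{and}\quad \frac{t_*}{\mu(Nc_q+\gamma_q)}\leq C_{N,q}^q a^{\frac{q(1-\gamma_q)}{2}}t_*^{\frac{q\gamma_q}{2}}.
\]
A short computation with the explicit constant $\rho_0$ shows the first inequality is exactly $t_*\geq\rho_0$; inserting the boundary value $\mu a^{\frac{q(1-\gamma_q)}{2}}=(2K)^{\frac{q\gamma_q-2^*}{2^*-2}}$ together with the explicit forms of $K$ and $\rho_0$, the second becomes exactly $t_*\leq\rho_0$. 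Hence $t_*=\rho_0$, both inequalities are asymptotic equalities, and therefore the Sobolev defect $S^{-\frac{2^*}{2}}t_n^{\frac{2^*}{2}}-A_n$ and the Gagliardo--Nirenberg defect $C_{N,q}^q a^{\frac{q(1-\gamma_q)}{2}}t_n^{\frac{q\gamma_q}{2}}-B_n$ both tend to $0$.

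The main obstacle is this last step: producing a contradiction from the fact that $\{u_n\}$ asymptotically saturates \emph{both} (\ref{e1.13}) and (\ref{e1.14}) while $\|u_n\|_2^2=a$ and $\|\nabla u_n\|_2^2\to\rho_0$ remain fixed and finite. I would resolve it by compactness: asymptotic saturation of (\ref{e1.14}) by a sequence with $\|u_n\|_2^2$ and $\|\nabla u_n\|_2^2$ bounded and bounded away from $0$ forces, up to translations and dilations, strong $H^1$-convergence of $u_n$ to an extremal $Q$ of (\ref{e1.14}). However, a Gagliardo--Nirenberg extremal $Q$ does not realize equality in the Sobolev inequality (\ref{e1.13}), so its Sobolev defect is strictly positive; by the convergence, the Sobolev defect of $u_n$ then cannot tend to $0$, a contradiction. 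This rules out the sequence $\{u_n\}$, and hence $\inf_{u\in\mathcal{P}_{a,-}}E(u)>0$.
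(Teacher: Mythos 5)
Your proposal is correct, and its computational core is exactly the paper's: reduce to the boundary case $\mu a^{\frac{q(1-\gamma_q)}{2}}=(2K)^{\frac{q\gamma_q-2^*}{2^*-2}}$, suppose a sequence in $\mathcal{P}_{a,-}$ with energies tending to $0$, use $P(u_n)=0$ and $E(u_n)\to 0$ together with (\ref{e1.13}) and (\ref{e1.14}) to pin down $\|\nabla u_n\|_2^2\to\rho_0$ (your identities $t_n=A_n+\mu\gamma_q B_n$, $E(u_n)=\frac1N A_n-\mu c_q B_n$ reproduce (\ref{e7.22}), and your two limiting inequalities are, after simplifying $\frac{Nc_q}{Nc_q+\gamma_q}=\frac{2^*(2-q\gamma_q)}{2(2^*-q\gamma_q)}$ and $Nc_q+\gamma_q=\frac{2(2^*-q\gamma_q)}{q(2^*-2)}\cdot q$-normalized, precisely the paper's $t_*\geq\rho_0$ and $t_*\leq\rho_0$), and conclude that $\{u_n\}$ asymptotically saturates both inequalities simultaneously; the final contradiction — a Gagliardo--Nirenberg extremal cannot also attain $S$ — is the same as the paper's clash between Weinstein's equation (\ref{e7.26}) and the Aubin--Talenti form (\ref{e7.27}). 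The one genuine divergence is the compactness mechanism. The paper first invokes Lemma \ref{pro1.3} to replace the sequence by a \emph{radial} one ($\inf_{\mathcal{P}_{a,-}}E=\inf_{\mathcal{P}_{a,-}\cap H_r^1(\mathbb{R}^N)}E$), so that the compact embedding of $H_r^1(\mathbb{R}^N)$ into $L^q(\mathbb{R}^N)$ plus weak lower semicontinuity of the $L^2$ and gradient norms yields a nonzero weak limit $u_0$ minimizing (\ref{e7.24}) and strong $H^1$ convergence, hence $u_0$ also minimizes (\ref{e7.25}); elementary tools only. You instead invoke compactness, modulo translations and dilations, of minimizing sequences for (\ref{e1.14}) with pinned $L^2$ and gradient norms. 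That statement is true, but it is a concentration-compactness theorem and the single heaviest ingredient of your argument; you leave it as a black box, so to make your proof complete you would need to prove it via Lions' method or cite it explicitly, whereas the radial reduction already available from \cite{Jeanjean-Le} sidesteps it (note also that since $\|u_n\|_2^2=a$ is fixed and $\|\nabla u_n\|_2^2\to\rho_0$, your dilations are in fact unnecessary — only translations matter). One minor imprecision: in dispatching the strict case you say the bound $E(u)\geq\|\nabla u\|_2^2 f_{\mu,a}(\|\nabla u\|_2^2)$ is ``bounded below away from $0$ on $\mathcal{P}_{a,-}$''; as stated this is false, since $\rho f_{\mu,a}(\rho)<0$ for $\rho$ large. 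The correct deduction (Lemma \ref{lem6.6}(2)) uses that each $u\in\mathcal{P}_{a,-}$ is the global maximum of its fiber map, so $E(u)\geq\inf_{\partial V_a}E>0$ when the inequality on $\mu a^{\frac{q(1-\gamma_q)}{2}}$ is strict; since that case is anyway covered by Theorem B, this is cosmetic rather than a gap.
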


\begin{proof}
In view of  Lemma \ref{lem6.6}, we just need to show that
$\inf_{u\in \mathcal{P}_{a,-}}E(u)\neq 0$ for the case $\mu
a^{\frac{q(1-\gamma_q)}{2}}=(2K)^{\frac{q\gamma_q-2^*}{2^*-2}}$.
Suppose by contradiction that $\inf_{u\in \mathcal{P}_{a,-}}E(u)=0$.
By Lemma \ref{pro1.3},
$\inf_{\mathcal{P}_{a,-}}E(u)=\inf_{\mathcal{P}_{a,-}\cap
H_{r}^1(\mathbb{R}^N)}E(u)$, which combined with Lemma \ref{lem6.6}
yields that there exists $\{u_n\}\subset S_a\cap
H_r^1(\mathbb{R}^N)$ such that $P(u_n)=0$ and $E(u_n)=A_n$, where
$A_n\geq 0$ and $A_n\to 0$ as $n\to \infty$. By using $E(u_n)=A_n$,
$P(u_n)=0$, $\|u_n\|_2^2=a$, (\ref{e1.13}) and (\ref{e1.14}), we
obtain that
\begin{equation}\label{e7.22}
\begin{cases}
\|\nabla u_n\|_2^2=\frac{2(2^*-q\gamma_q)}{q(2^*-2)}\mu
\|u_n\|_q^q+C_1A_n\leq \frac{2(2^*-q\gamma_q)}{q(2^*-2)}\mu
C_{N,q}^q a^{\frac{q(1-\gamma_q)}{2}}\|\nabla u_n\|_2^{q\gamma_q}+C_1A_n,\\
\|\nabla
u_n\|_2^2=\frac{2(2^*-q\gamma_q)}{2^*(2-q\gamma_q)}\|u_n\|_{2^*}^{2^*}-C_2A_n
\leq \frac{2(2^*-q\gamma_q)}{2^*(2-q\gamma_q)}
S^{-\frac{2^*}{2}}\|\nabla u_n\|_2^{2^*}-C_2A_n,
\end{cases}
\end{equation}
where $C_1$ and $C_2$ are some positive constants. Consequently,
$\liminf_{n\to\infty}\|\nabla u_n\|_2^2>0$ and
\begin{equation*}
\begin{cases}
\|\nabla u_n\|_2^{2-q\gamma_q}\leq
\frac{2(2^*-q\gamma_q)}{q(2^*-2)}\mu
C_{N,q}^q a^{\frac{q(1-\gamma_q)}{2}}+o_n(1)
=\left(\frac{2^*(2-q\gamma_q)}{2(2^*-q\gamma_q)} S^{\frac{2^*}{2}}\right)^{\frac{2-q\gamma_q}{2^*-2}}+o_n(1),\\
\|\nabla u_n\|_2^{2^*-2}\geq
\frac{2^*(2-q\gamma_q)}{2(2^*-q\gamma_q)} S^{\frac{2^*}{2}}+o_n(1),
\end{cases}
\end{equation*}
which implies that
\begin{equation*}
\begin{cases}
\|\nabla u_n\|_2^{2}\leq  \rho_0+o_n(1),\\
\|\nabla u_n\|_2^{2}\geq \rho_0+o_n(1).
\end{cases}
\end{equation*}
Hence, $\|\nabla u_n\|_2^2\to \rho_0$ as $n\to \infty$, which
combined with (\ref{e7.22}) gives that
\begin{equation*}
\begin{cases}
\|u_n\|_q^q\to C_{N,q}^q \|u_n\|_2^{q(1-\gamma_q)}\|\nabla u_n\|_2^{q\gamma_q},\\
\|u_n\|_{2^*}^{2^*}\to S^{-\frac{2^*}{2}}\|\nabla u_n\|_2^{2^*}
\end{cases}
\end{equation*}
as $n\to \infty$. That is, $\{u_n\}\subset H_r^1(\mathbb{R}^N)$ is a
minimizing sequence of
\begin{equation}\label{e7.24}
\begin{split}
\frac{1}{C_{N,q}^q}:&=\inf_{ u\in
H^{1}(\mathbb{R}^N)\setminus\{0\}}\frac{\|\nabla
u\|_2^{q\gamma_q}\|u\|_2^{q(1-\gamma_q)}}{\|u\|_q^q}
\end{split}
\end{equation}
and
\begin{equation}\label{e7.25}
\begin{split}
S:&=\inf_{ u\in D^{1,2}(\mathbb{R}^N)\setminus\{0\}}\frac{\|\nabla
u\|_2^2}{\|u\|_{2^*}^2}.
\end{split}
\end{equation}
Since $\{u_n\}\subset H_r^1(\mathbb{R}^N)$ is bounded, there exists
$u_0\in H_r^1(\mathbb{R}^N)\backslash \{0\}$ such that
$u_n\rightharpoonup u_0$ weakly in $H^1(\mathbb{R}^N)$, $u_n\to u_0$
strongly in $L^t(\mathbb{R}^N)$ with $t\in (2,2^*)$ and $u_n\to u_0$
a.e. in $\mathbb{R}^N$. By the weak convergence, we have
$\|u_0\|_2^2\leq \|u_n\|_2^2$ and $\|\nabla u_0\|_2^2\leq \|\nabla
u_n\|_2^2$. Consequently, $u_0$ is a minimizer of (\ref{e7.24}) and
$u_n\to u_0$ strongly in $H^1(\mathbb{R}^N)$. By Theorem B in
\cite{Weinstein 1983}, $u_0$ is the ground state of the equation
\begin{equation}\label{e7.26}
\frac{(q-2)N}{4}\Delta
u-\left(1+\frac{(q-2)(2-N)}{4}\right)u+|u|^{q-2}u=0.
\end{equation}
By using (\ref{e7.25}) and $u_n\to u_0$ strongly in
$H^1(\mathbb{R}^N)$, we obtain that $u_0$ is a minimizer of $S$. So
$u_0$ is of the form
\begin{equation}\label{e7.27}
u_0=C\left(\frac{b}{b^2+|x|^2}\right)^{\frac{N-2}{2}},
\end{equation}
where $C>0$ is a fixed constant and $b\in (0,\infty)$ is a
parameter, see \cite{Brezis-Nirenberg 1983}. (\ref{e7.27})
contradicts to (\ref{e7.26}). Thus, $\inf_{u\in
\mathcal{P}_{a,-}}E(u)>0$.
\end{proof}

\begin{lemma}\label{lem7.1}
(Lemma 3.1 and Remark 3.1, \cite{Wei-Wu 2021}) Let the assumptions
in Theorem \ref{thm1.8} hold. Then
\begin{equation*}
\inf_{u\in \mathcal{P}_{a,-}}E(u)<m_a+\frac{1}{N}S^{\frac{N}{2}}.
\end{equation*}
\end{lemma}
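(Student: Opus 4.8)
The plan is to reduce the estimate to the construction of a single well-chosen test function and then to carry out a Brezis--Nirenberg type energy expansion. By Lemma \ref{pro1.3} we have $\inf_{\mathcal P_{a,-}}E=M(a)$, but the fiber structure alone already suffices: for every $w\in S_a$, Lemma \ref{lem6.6}(2) tells us that $\tau_w^-$ is the global maximum of $\Psi_w$ and that $w_{\tau_w^-}\in\mathcal P_{a,-}$, whence
\[
\inf_{u\in\mathcal P_{a,-}}E(u)\le E(w_{\tau_w^-})=\max_{\tau>0}\Psi_w(\tau).
\]
Thus it is enough to exhibit one $w\in S_a$ with $\max_{\tau>0}\Psi_w(\tau)<m_a+\frac1N S^{N/2}$.

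First I would fix a minimizer $v_a\in V_a$ of $m_a$, which exists by Theorem \ref{thm1.1}; replacing $v_a$ by $|v_a|$ I may take it real and nonnegative, and I note that on its own fiber $v_a$ sits exactly at the local minimum, with $\tau_{v_a}^+=1$ and $\Psi_{v_a}(1)=m_a$. The additive shape of the target $m_a+\frac1N S^{N/2}$ indicates the correct ansatz: superimpose on $v_a$ a concentrating Aubin--Talenti extremal for \eqref{e1.13}. Concretely I would take the truncated bubbles $u_\varepsilon=\phi\,U_\varepsilon$, with $U_\varepsilon$ the rescaled instanton and $\phi$ a fixed cut-off, recall the classical expansions $\|\nabla u_\varepsilon\|_2^2=S^{N/2}+O(\varepsilon^{N-2})$ and $\|u_\varepsilon\|_{2^*}^{2^*}=S^{N/2}+O(\varepsilon^{N})$, form the combination $v_a+u_\varepsilon$ (with the interaction terms estimated in the standard way), and renormalize it to $S_a$ to obtain $w_\varepsilon$.

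The heart of the matter is the expansion of $\max_{\tau>0}\Psi_{w_\varepsilon}(\tau)$ as $\varepsilon\to0$. The purely critical part of the fiber has maximum $\frac1N(\|\nabla u_\varepsilon\|_2^2)^{N/2}(\|u_\varepsilon\|_{2^*}^{2^*})^{-(N-2)/2}=\frac1N S^{N/2}+O(\varepsilon^{N-2})$, while the contribution of $v_a$ keeps the $m_a$ part to leading order. The decisive gain comes from the focusing subcritical term: at the maximizing scale it produces a strictly negative correction of order $\|u_\varepsilon\|_q^q$ through the factor $-\frac{\mu}{q}\tau^{q\gamma_q}\|u_\varepsilon\|_q^q$ in \eqref{e6.6}, estimated via \eqref{e1.14}. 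Showing that this gain strictly dominates the positive error $O(\varepsilon^{N-2})$ (and the error created by the $L^2$-renormalization) yields $\max_\tau\Psi_{w_\varepsilon}(\tau)<m_a+\frac1N S^{N/2}$ for $\varepsilon$ small, which is the assertion.

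I expect the main obstacle to be exactly this competition of orders. The size of $\|u_\varepsilon\|_q^q$ changes regime according to whether $q$ is larger or smaller than $\frac{N}{N-2}$, and the mass $\|u_\varepsilon\|_2^2$ entering the renormalization is $O(\varepsilon^2)$ for $N\ge5$ but only $O(\varepsilon^2|\log\varepsilon|)$ for $N=4$ and $O(\varepsilon)$ for $N=3$. Hence in low dimensions the renormalization perturbs both $v_a$ and the critical energy at a non-negligible order, and one must verify case by case that the subcritical gain still wins; this is precisely the point at which Wei and Wu needed the separate argument of their Remark 3.1 for $N=3$, and it is the step I would treat most carefully.
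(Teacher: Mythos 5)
First, a point of reference: the paper does not prove this lemma at all --- it is imported verbatim from Wei--Wu (Lemma 3.1 and Remark 3.1 of \cite{Wei-Wu 2021}), so the comparison is necessarily with that cited proof. Your overall architecture (fix a minimizer $v_a$ of $m_a$, superimpose a truncated Aubin--Talenti bubble $u_\varepsilon$, renormalize to $S_a$, and bound $\inf_{\mathcal{P}_{a,-}}E$ by $\max_{\tau>0}\Psi_{w_\varepsilon}(\tau)$ via Lemma \ref{lem6.6}) is indeed the architecture of that proof; the reduction $\inf_{\mathcal{P}_{a,-}}E(u)\le \max_{\tau>0}\Psi_w(\tau)$ is legitimate here, since in the borderline case $\mu a^{q(1-\gamma_q)/2}=(2K)^{(q\gamma_q-2^*)/(2^*-2)}$ Lemma \ref{lem7.21} rules out $E(w_{\tau_w^-})=0$, so $w_{\tau_w^-}\in\mathcal{P}_{a,-}$.

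The genuine gap is in the mechanism you designate as decisive. You claim the strict gain comes from the bubble's own subcritical term $-\frac{\mu}{q}\tau^{q\gamma_q}\|u_\varepsilon\|_q^q$ and that one need only ``verify case by case that the subcritical gain still wins.'' It does not win when $N=3$: with the standard truncated bubble one has $\|u_\varepsilon\|_q^q\sim\varepsilon^{q(N-2)/2}$ for $q<\frac{N}{N-2}$, $\sim\varepsilon^{N/2}|\log\varepsilon|$ for $q=\frac{N}{N-2}$, and $\sim\varepsilon^{N-q(N-2)/2}$ for $q>\frac{N}{N-2}$; for $N=3$ and every $q\in(2,\frac{10}{3})$ these exponents give $\|u_\varepsilon\|_q^q=o(\varepsilon)=o(\varepsilon^{N-2})$, so the subcritical self-term is strictly dominated by the $O(\varepsilon^{N-2})$ truncation error in $\|\nabla u_\varepsilon\|_2^2$ and can never close the estimate. (For $N\ge 4$ your mechanism is fine, since then $\|u_\varepsilon\|_q^q\gg\varepsilon^{N-2}$ for all $q>2$.) In dimension $3$ the gain must instead be extracted from the cross terms between $v_a$ and the bubble --- chiefly the critical interaction $\int_{\mathbb{R}^3}v_a\,u_\varepsilon^{2^*-1}\sim\varepsilon^{(N-2)/2}=\varepsilon^{1/2}$, which does dominate $\varepsilon^{N-2}$ --- and this is exactly the content of Wei--Wu's Remark 3.1 that you cite but do not incorporate. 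A second, related issue hides in your phrase ``with the interaction terms estimated in the standard way'': the $L^2$-renormalization correction is driven by $\int v_a\,u_\varepsilon\sim\varepsilon^{(N-2)/2}$, i.e.\ it is of \emph{exactly the same order} as the critical interaction gain, so the argument survives only after tracking the constants in front of these two competing $\varepsilon^{(N-2)/2}$ terms (or after arranging the test function so the mass correction enters at higher order). As written, your proposal would prove the lemma for $N\ge 4$ but fails for $N=3$, which is precisely the case the cited Remark 3.1 was written to handle.
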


\begin{lemma}\label{pro1.4}
(Proposition 1.11, \cite{Jeanjean-Le}) Assume the assumptions in
Theorem \ref{thm1.8} hold. Let $\{u_n\}\subset S_{a,r}$ be a
Palais-Smale sequence for $E|_{S_a}$ at level $c$, with $P(u_n)\to
0$ as $n\to \infty$. If
\begin{equation*}
0<c<m_a+\frac{1}{N}S^{\frac{N}{2}},
\end{equation*}
then up a subsequence, $u_n\to u$ strongly in $H^1(\mathbb{R}^N)$,
and $u$ is a radial solution to (\ref{e1.11}) with $E(u)=c$ and some
$\lambda<0$.
\end{lemma}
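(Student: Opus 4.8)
The plan is to run the standard concentration--compactness scheme for a Palais--Smale sequence at a subcritical energy level, where the extra Pohozaev information $P(u_n)\to 0$ compensates for the lack of compactness from the Sobolev critical term, and where the threshold $c<m_a+\frac{1}{N}S^{\frac{N}{2}}$ prevents a Sobolev bubble from splitting off. First I would show that $\{u_n\}$ is bounded in $H^1(\mathbb{R}^N)$. Since $\|u_n\|_2^2=a$ is fixed, only $\|\nabla u_n\|_2^2$ must be controlled. Using the identity
\[
E(u_n)-\frac{1}{2^*}P(u_n)=\frac{1}{N}\|\nabla u_n\|_2^2-\mu\Big(\frac{1}{q}-\frac{\gamma_q}{2^*}\Big)\|u_n\|_q^q,
\]
together with $E(u_n)\to c$, $P(u_n)\to 0$, the Gagliardo--Nirenberg inequality (\ref{e1.14}) and the fact that $q\gamma_q<2$ (which holds because $q<2+\frac{4}{N}$), one obtains an inequality of the shape $\frac{1}{N}\|\nabla u_n\|_2^2\le C_1+C_2\|\nabla u_n\|_2^{q\gamma_q}$ with exponent $q\gamma_q<2$, forcing boundedness.

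Next, by radial symmetry I would pass to a subsequence with $u_n\rightharpoonup u$ in $H^1$, $u_n\to u$ in $L^q$ (compact radial embedding, valid since $q\in(2,2^*)$) and $u_n\to u$ a.e. The constrained Palais--Smale condition produces Lagrange multipliers $\lambda_n=\frac{1}{a}\langle E'(u_n),u_n\rangle$ with $E'(u_n)-\lambda_n u_n\to 0$ in $H^{-1}$; boundedness gives $\lambda_n\to\lambda$ along a further subsequence, and $u$ is a weak solution with multiplier $\lambda$, hence satisfies $P(u)=0$ by the Pohozaev identity. I would then show $u\neq 0$: if $u=0$ then $\|u_n\|_q\to 0$, and $P(u_n)\to 0$ with $E(u_n)\to c$ force $\|\nabla u_n\|_2^2\to Nc$ and $\|u_n\|_{2^*}^{2^*}\to Nc$; the Sobolev inequality (\ref{e1.13}) then yields $c\ge\frac{1}{N}S^{\frac{N}{2}}$, contradicting $c<m_a+\frac{1}{N}S^{\frac{N}{2}}$ together with $m_a<0$. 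Finally, combining the tested equation with $P(u_n)\to 0$ gives $\lambda a=-\mu(1-\gamma_q)\|u\|_q^q<0$, so $\lambda<0$.

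The heart of the argument, and the step I expect to be the main obstacle, is the strong convergence. Set $v_n:=u_n-u$, so $\|v_n\|_q\to 0$ and $\|v_n\|_2^2\to b:=a-\|u\|_2^2$. Two Brezis--Lieb splittings drive everything. Subtracting the Pohozaev functionals and using $P(u_n)\to 0$, $P(u)=0$ gives $\ell=m$, where $\ell:=\lim\|\nabla v_n\|_2^2$ and $m:=\lim\|v_n\|_{2^*}^{2^*}$. Subtracting instead the equations tested against $u_n$ and against $u$ gives $\ell-m=\lambda b$; hence $\lambda b=0$, and since $\lambda<0$ we conclude $b=0$, so no $L^2$ mass is lost and $u\in S_a$. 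It then remains to exclude $\ell=m>0$: the Sobolev inequality forces $\ell=m\ge S^{\frac{N}{2}}$, while the energy splitting yields
\[
c-E(u)=\frac{1}{2}\ell-\frac{1}{2^*}m=\frac{1}{N}\ell\ge\frac{1}{N}S^{\frac{N}{2}}.
\]

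The last ingredient is the lower bound $E(u)\ge m_a$. Since $u\in S_a$ satisfies $P(u)=0$, the value $\tau=1$ is a critical point of its fiber map $\Psi_u$, so by Lemma \ref{lem6.6} either $\tau=1=\tau_u^+$, in which case $u\in V_a$ and $E(u)\ge m_a$, or $\tau=1=\tau_u^-$, in which case $E(u)>0>m_a$. Either way $c\ge E(u)+\frac{1}{N}S^{\frac{N}{2}}\ge m_a+\frac{1}{N}S^{\frac{N}{2}}$, contradicting the hypothesis. Hence $\ell=m=0$, which together with $b=0$ gives $v_n\to 0$ in $H^1$, so $u_n\to u$ strongly. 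The limit $u$ is then a radial element of $S_a$ with $E(u)=c$ solving $-\Delta u=\lambda u+|u|^{2^*-2}u+\mu|u|^{q-2}u$ with $\lambda<0$, as required. The delicate point throughout is that the Pohozaev condition is exactly what upgrades the two Brezis--Lieb relations into $b=0$ and $\ell=m$, after which the energy gap closes the argument.
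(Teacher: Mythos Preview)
The paper does not supply its own proof of this lemma: it is quoted verbatim as Proposition~1.11 of \cite{Jeanjean-Le}, in keeping with the remark at the start of Section~2 that results carried over from \cite{Jeanjean-Le} are listed without proofs. Your sketch is correct and follows exactly the standard scheme used in that reference: boundedness from the combination $E-\tfrac{1}{2^*}P$ and $q\gamma_q<2$; nonvanishing of the weak limit via the bubble threshold $\tfrac{1}{N}S^{N/2}$ and $m_a<0$; the sign of $\lambda$ from $\lambda a=-\mu(1-\gamma_q)\|u\|_q^q$; and strong convergence by coupling the two Brezis--Lieb splittings ($\ell=m$ from $P$, $\ell-m=\lambda b$ from the equation) with the energy gap $c-E(u)=\tfrac{1}{N}\ell$ and the fiber-map dichotomy of Lemma~\ref{lem6.6} giving $E(u)\ge m_a$.
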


\textbf{Proof of Theorem \ref{thm1.8}}. The existence of a mountain
pass type solution  is a direct result of Lemmas \ref{pro1.2},
\ref{pro1.3}, \ref{lem7.21}, \ref{lem7.1} and \ref{pro1.4}. The
strong instability  of the associated standing wave is the same as
Theorem 1.9 in \cite{Jeanjean-Le}.

\section{Proof of Theorem \ref{thm1.9}}
\setcounter{section}{3} \setcounter{equation}{0}

Let $\Psi_u(\tau)$ be defined in (\ref{e6.6}) and define
\begin{equation*}
\Phi_u(\tau):=P(u_\tau)=\tau^{2}\|\nabla
u\|_2^2-\tau^{2^*}\|u\|_{2^*}^{2^*}-\mu\gamma_q\tau^{q\gamma_q}\|u\|_q^{q}.
\end{equation*}

\begin{lemma}\label{lem3.1}
Let $N\geq 3$, $q=2+\frac{4}{N}$, $\mu>0$, $a>0$ and $\mu
a^{\frac{q(1-\gamma_q)}{2}}\geq \bar{a}_N$. Then we have the
following results:

(1) If $u\in S_a$ such that $\|\nabla u\|_2^2>\mu
\gamma_q\|u\|_q^q$, then there exists a unique $\tau_u\in
(0,\infty)$ such that $P(u_{\tau_u})=0$.  $\tau_u$ is the unique
critical point of $\Psi_u(\tau)$, and is a maximum point at positive
level. Moreover,  $P(u)\leq 0 \Leftrightarrow \tau_u\leq 1$.

(2) If $u\in S_a$ such that $\|\nabla u\|_2^2\leq \mu
\gamma_q\|u\|_q^q$, then there does not exist $\tau\in (0,\infty)$
such that $P(u_\tau)=0$, and $\Psi_u(\tau)$ is negative and is
strictly decreasing on $(0,\infty)$.
\end{lemma}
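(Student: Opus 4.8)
The plan is to exploit the defining feature of the $L^2$-critical exponent, namely that $q=2+\frac{4}{N}$ forces $q\gamma_q=2$. Indeed $\gamma_q=\frac{N}{2}-\frac{N}{q}=\frac{N}{N+2}$, whence $q\gamma_q=\frac{2(N+2)}{N}\cdot\frac{N}{N+2}=2$, equivalently $\gamma_q=\frac{2}{q}$. Because of this the term $\tau^{q\gamma_q}$ occurring in both $\Psi_u$ and $\Phi_u$ collapses to $\tau^2$, so that, writing $A:=\|\nabla u\|_2^2$, $B:=\|u\|_{2^*}^{2^*}$ and $C:=\|u\|_q^q$, I would record
\[
\Phi_u(\tau)=\tau^2\bigl(A-\mu\gamma_q C\bigr)-\tau^{2^*}B,\qquad \tau>0.
\]
The entire argument then rests on the elementary relation $\Psi_u'(\tau)=\tau^{-1}\Phi_u(\tau)$ (equivalently $P(u_\tau)=\tau\,\Psi_u'(\tau)$), which one checks directly from (\ref{e6.6}) using $\gamma_q=\frac{2}{q}$. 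Thus the critical points of the fiber map $\Psi_u$ are exactly the positive zeros of $\Phi_u$, and the sign of $\Psi_u'$ is governed by that of $\Phi_u$. Since $u\in S_a$ forces $u\neq 0$ and hence $B>0$, the behaviour of $\Phi_u$ is entirely dictated by the sign of the coefficient $A-\mu\gamma_q C=\|\nabla u\|_2^2-\mu\gamma_q\|u\|_q^q$, which is precisely the dichotomy in the statement.

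For part (1), assuming $A-\mu\gamma_q C>0$, I would note that $\Phi_u(\tau)>0$ for small $\tau$ while $\Phi_u(\tau)<0$ for large $\tau$ (as $2^*>2$), and that $\Phi_u(\tau)=0$ reduces to $\tau^{2^*-2}=(A-\mu\gamma_q C)/B$, giving the single positive root
\[
\tau_u=\left(\frac{A-\mu\gamma_q C}{B}\right)^{\frac{1}{2^*-2}}.
\]
Hence $\Phi_u>0$ on $(0,\tau_u)$ and $\Phi_u<0$ on $(\tau_u,\infty)$; through $\Psi_u'=\tau^{-1}\Phi_u$ this makes $\tau_u$ the unique critical point of $\Psi_u$ and a strict global maximum, while $\Psi_u$ increasing on $(0,\tau_u)$ together with $\Psi_u(\tau)\to 0$ as $\tau\to 0^+$ gives $\Psi_u(\tau_u)>0$, i.e.\ the maximum sits at positive level. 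Finally, since $P(u)=\Phi_u(1)$, the monotone sign change of $\Phi_u$ at $\tau_u$ yields $P(u)\leq 0\Leftrightarrow \Phi_u(1)\leq 0\Leftrightarrow 1\geq\tau_u$, which is the claimed equivalence.

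For part (2), assuming $A-\mu\gamma_q C\leq 0$, the coefficient of $\tau^2$ is nonpositive and that of $\tau^{2^*}$ is $-B<0$, so $\Phi_u(\tau)<0$ for every $\tau>0$; in particular $\Phi_u$ has no positive zero. Then $\Psi_u'(\tau)=\tau^{-1}\Phi_u(\tau)<0$ on all of $(0,\infty)$, so $\Psi_u$ is strictly decreasing, and since $\Psi_u(\tau)\to 0$ as $\tau\to 0^+$ we obtain $\Psi_u(\tau)<0$ throughout.

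I do not expect a genuine obstacle: once $q\gamma_q=2$ is recorded, the statement reduces to the calculus of the two-term function $\Phi_u$, and the only point deserving care is the identity $\Psi_u'=\tau^{-1}\Phi_u$, which is what transfers conclusions about zeros and signs of $\Phi_u$ into statements about critical points, monotonicity and level of $\Psi_u$. It is worth noting where the standing hypothesis $\mu a^{q(1-\gamma_q)/2}\geq\bar a_N$ enters: it plays no role in the dichotomy itself, but by (\ref{e1.14}) and $\|u\|_2^2=a$ one has $\mu\gamma_q\|u\|_q^q\leq \mu\gamma_q C_{N,q}^q a^{q(1-\gamma_q)/2}\|\nabla u\|_2^2$, and since $\gamma_q=\frac{2}{q}$ gives $\gamma_q C_{N,q}^q\bar a_N=1$, the hypothesis is exactly what guarantees that the regime of part (2) is non-vacuous.
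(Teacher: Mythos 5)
Your proof is correct and takes essentially the same approach the paper relies on: the paper handles part (1) by citing Lemma 6.2 of Soave (J.\ Differential Equations, 2020) and part (2) by inspection of $P(u_\tau)$ and $\Psi_u(\tau)$, and both boil down to exactly your observation that $q\gamma_q=2$ collapses $\Phi_u(\tau)=\tau\,\Psi_u'(\tau)$ to the two-term function $\tau^2\bigl(\|\nabla u\|_2^2-\mu\gamma_q\|u\|_q^q\bigr)-\tau^{2^*}\|u\|_{2^*}^{2^*}$. Your write-up simply supplies in full the elementary fiber-map calculus the paper delegates to the reference, and your closing remark correctly identifies that the hypothesis $\mu a^{q(1-\gamma_q)/2}\geq\bar{a}_N$ is not needed for the dichotomy itself but only makes the regime of part (2) non-vacuous.
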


\begin{proof}
The proof of (1)  is similar to Lemma 6.2 in \cite{Soave JDE}, and
the proof of (2) is a direct result of the expressions of
$P(u_\tau)$ and $\Psi_u(\tau)$.
\end{proof}

\begin{lemma}\label{lem3.2}
Let $N\geq 3$, $q=2+\frac{4}{N}$, $\mu>0$, $a>0$  and $\mu
a^{\frac{q(1-\gamma_q)}{2}}\geq \bar{a}_N$. Then
\begin{equation*}
c^{po}:=\inf_{v\in\mathcal{P}_a}E(v)=0.
\end{equation*}
\end{lemma}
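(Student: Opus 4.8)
The plan is to reduce $c^{po}$ to a scale-invariant quotient and then produce an explicit minimizing family. The algebraic engine is the identity $q\gamma_q=2$, which holds precisely because $q=2+\frac{4}{N}$.

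First I would record that $c^{po}\geq 0$ for free. For $v\in\mathcal{P}_a$ the constraint $P(v)=0$ reads $\|\nabla v\|_2^2=\|v\|_{2^*}^{2^*}+\mu\gamma_q\|v\|_q^q$; substituting this into $E(v)$ and using $\frac12-\frac{1}{2^*}=\frac1N$ together with $\frac{\gamma_q}{2}-\frac1q=0$ (again $\gamma_q=2/q$), the $q$-term cancels and one obtains the clean identity
\begin{equation*}
E(v)=\frac1N\|v\|_{2^*}^{2^*}\qquad\text{for all }v\in\mathcal{P}_a.
\end{equation*}
In particular $E>0$ on $\mathcal{P}_a$, so $c^{po}\geq 0$ and, should the value $0$ be approached, it can never be attained. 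The whole content is therefore the upper bound $c^{po}\leq 0$.

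Next I would turn the problem into a quotient minimization via the fiber projection of Lemma \ref{lem3.1}. For $u\in S_a$ with $\|\nabla u\|_2^2>\mu\gamma_q\|u\|_q^q$, part (1) of Lemma \ref{lem3.1} gives a unique $\tau_u$ with $u_{\tau_u}\in\mathcal{P}_a$; from $\Phi_u(\tau_u)=0$ one solves $\tau_u^{2^*-2}\|u\|_{2^*}^{2^*}=\|\nabla u\|_2^2-\mu\gamma_q\|u\|_q^q$, and feeding this into the identity above yields
\begin{equation*}
E(u_{\tau_u})=\frac1N\,\frac{\bigl(\|\nabla u\|_2^2-\mu\gamma_q\|u\|_q^q\bigr)^{N/2}}{\|u\|_{2^*}^{N}}.
\end{equation*}
Hence it suffices to construct a sequence $\{u_n\}\subset S_a$ with $A(u_n):=\|\nabla u_n\|_2^2-\mu\gamma_q\|u_n\|_q^q>0$, $A(u_n)\to 0$, and $\liminf_n\|u_n\|_{2^*}>0$; then $E\bigl((u_n)_{\tau_{u_n}}\bigr)=\frac1N\bigl(A(u_n)/\|u_n\|_{2^*}^2\bigr)^{N/2}\to 0$ and $c^{po}=0$.

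Finally the construction. The threshold hypothesis $\mu a^{\frac{q(1-\gamma_q)}{2}}\geq\bar{a}_N$ is exactly the statement that the $L^2$-normalized Gagliardo--Nirenberg extremal $w\in S_a$ (using $q\gamma_q=2$ and the value $\bar{a}_N=\frac{q}{2C_{N,q}^q}$) satisfies $A(w)\leq 0$, with equality at the boundary $\mu a^{\frac{q(1-\gamma_q)}{2}}=\bar{a}_N$. On the other hand a sufficiently non-extremal $\phi\in S_a$, one with $\|\phi\|_q^q/\|\nabla\phi\|_2^2$ small (e.g.\ a highly oscillating profile), has $A(\phi)>0$. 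I would join $w$ and $\phi$ by a continuous path in $S_a$ (normalizing convex combinations in $L^2$); along it $A$ is continuous and passes from $\leq 0$ to $>0$, so by the intermediate value theorem there is a point $w^*$ with $A(w^*)=0$, and I take $\{u_n\}$ on the side $A>0$ converging to $w^*$, so $A(u_n)\to 0^+$. Since $w^*\in S_a$ is nonzero, $\|w^*\|_{2^*}>0$, and continuity of $u\mapsto\|u\|_{2^*}$ along the path keeps $\|u_n\|_{2^*}$ bounded below. The main obstacle is precisely this last point: one must send $A(u_n)\to 0^+$ \emph{without} letting $\|u_n\|_{2^*}\to 0$, since sending $A$ to $0$ carelessly (for instance by spreading mass out to infinity) destroys the $L^{2^*}$-norm and makes the quotient blow up rather than vanish. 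The resolution is that the zero set $\{A=0\}$ meets $S_a$ at genuine, non-vanishing functions (the extremal $w$ at the boundary, or the crossing point $w^*$ in the strict case), and approaching such a fixed function along a fixed path automatically keeps $\|u_n\|_{2^*}$ away from $0$. Verifying that a shape with $A>0$ exists and that the chosen path stays in $S_a$ with the stated continuity are the remaining routine checks.
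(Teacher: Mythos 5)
Your proof is correct, and its first two steps coincide exactly with the paper's reduction: the identity $E(v)=\frac{1}{N}\|v\|_{2^*}^{2^*}$ on $\mathcal{P}_a$ (a consequence of $q\gamma_q=2$), and the projection via Lemma \ref{lem3.1} yielding $E(u_{\tau_u})=\frac{1}{N}\bigl(A(u)/\|u\|_{2^*}^2\bigr)^{N/2}$ with $A(u):=\|\nabla u\|_2^2-\mu\gamma_q\|u\|_q^q$, which is precisely the paper's formula and its condition (3.1). Where you genuinely diverge is the construction of the sequence. The paper argues in two cases: at the threshold $\mu a^{\frac{q(1-\gamma_q)}{2}}=\bar{a}_N$ it truncates the Gagliardo--Nirenberg extremal by cutoffs $\varphi(x/n)u$ and renormalizes in $L^2$, with the needed strict positivity $A(u_n)>0$ coming from the fact that a compactly supported function cannot be an extremal; above the threshold it uses that the GN quotient $f$ attains every value $M_n=(\mu\gamma_q+A_n)a^{\frac{q(1-\gamma_q)}{2}}>1/C_{N,q}^q$, rescales by a two-parameter dilation so that $\|u_n\|_2^2=a$ and $\|u_n\|_q=1$, and bounds $\|u_n\|_{2^*}$ from below by interpolation between $L^2$ and $L^{2^*}$. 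Your single path argument---joining the normalized extremal $w$ (where the hypothesis is exactly $A(w)\leq 0$, with equality iff $\mu a^{\frac{q(1-\gamma_q)}{2}}=\bar{a}_N$) to a function $\phi$ with $A(\phi)>0$, setting $t^*=\sup\{t:A(\gamma(t))\leq 0\}$ so that $A(\gamma(t^*))=0$ and $A>0$ to the right of $t^*$, and approaching $w^*=\gamma(t^*)$ from that side---handles both cases at once; continuity of $\|\cdot\|_{2^*}$ along the fixed path replaces the paper's interpolation bound and correctly neutralizes the one real danger you identified, namely vanishing of $\|u_n\|_{2^*}$. The comparison cuts both ways: the paper's Case 2 surjectivity claim for $f$ is itself an intermediate-value argument, so your proof can be read as unifying the paper's two cases, at the price of invoking Weinstein's attainment of the GN best constant (which the paper's Case 1 also needs, and which is unavoidable at the boundary since there $A\geq 0$ on all of $S_a$ with equality only at extremals); the paper's Case 2, by contrast, gets a quantitative lower bound $\|u_n\|_{2^*}\geq a^{-\frac{1-\theta}{2\theta}}$ without tracking convergence to a limit point. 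The checks you deferred are indeed routine: $A(\phi)>0$ is achieved by modulation $\phi=c\,\psi e^{ik\cdot x}$ with $|k|$ large (functions here are complex-valued; pure dilation is useless since $q\gamma_q=2$ makes $A$ scale-homogeneous on $S_a$), and the normalized segment from $w$ to $\phi$ stays in $S_a$ because such a $\phi$ is not a negative multiple of $w$.
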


\begin{proof}
By Lemma \ref{lem3.1}, for any $u\in S_a$ with $\|\nabla u\|_2^2>\mu
\gamma_q\|u\|_q^q$,  there exists a unique $\tau_u\in (0,\infty)$
such that $P(u_{\tau_u})=0$. Hence, $\tau_u$ satisfies
\begin{equation*}
(\tau_u)^2(\|\nabla
u\|_2^2-\mu\gamma_q\|u\|_q^q)=(\tau_u)^{2^*}\|u\|_{2^*}^{2^*}
\end{equation*}
and
\begin{equation*}
E(u_{\tau_u})=\left(\frac{1}{2}-\frac{1}{2^*}\right)\left(\frac{\|\nabla
u\|_2^2-\mu\gamma_q\|u\|_q^q}{\|u\|_{2^*}^2}\right)^{\frac{2^*}{2^*-2}}.
\end{equation*}
Consequently, to prove Lemma \ref{lem3.2}, it is enough to find
$\{u_n\}\subset S_a$ with
\begin{equation}\label{e3.1}
\|\nabla u_n\|_2^2>\mu \gamma_q\|u_n\|_q^q\ \  \mathrm{and}\ \
\frac{\|\nabla u_n\|_2^2-\mu
\gamma_q\|u_n\|_q^q}{\|u_n\|_{2^*}^{2}}\to 0.
\end{equation}

\textbf{Case 1 ($\mu a^{\frac{q(1-\gamma_q)}{2}}= \bar{a}_N$)}.
Since in this case we have $\mu \gamma_q C_{N,q}^q
a^{\frac{q(1-\gamma_q)}{2}}=1$, so for any $u\in S_a$, we get that
\begin{equation*}
\mu \gamma_q\|u\|_q^q\leq \mu \gamma_q C_{N,q}^q
a^{\frac{q(1-\gamma_q)}{2}}\|\nabla u\|_2^2=\|\nabla u\|_2^2
\end{equation*}
and the equality holds if and only if $u$ is a minimizer of
\begin{equation}\label{e3.2}
\begin{split}
\frac{1}{C_{N,q}^q}:&=\inf_{ u\in
H^{1}(\mathbb{R}^N)\setminus\{0\}}\frac{\|\nabla
u\|_2^{q\gamma_q}\|u\|_2^{q(1-\gamma_q)}}{\|u\|_q^q},
\end{split}
\end{equation}
or equivalently, $u$ is the ground state of the equation
\begin{equation*}
\frac{(q-2)N}{4}\Delta
u-\left(1+\frac{(q-2)(2-N)}{4}\right)u+|u|^{q-2}u=0.
\end{equation*}

Now let $u\in S_a$ be a minimizer of (\ref{e3.2}) and $\varphi(x)
\in C_c^{\infty}(\mathbb{R}^N)$ be a cut off function satisfying:
(a) $0\leq \varphi(x)\leq 1$ for any $x\in \mathbb{R}^N$; (b)
$\varphi(x)\equiv 1$ in $B_1$; (c) $\varphi(x)\equiv 0$ in
$\mathbb{R}^N\setminus \overline{B_2}$. Here, $B_s$ denotes the ball
in $\mathbb{R}^N$ of center at origin and radius $s$. Define
\begin{equation*}
v_n(x):=\varphi(\frac{x}{n})u(x),\ \
u_n(x):=a^{1/2}\|v_n\|_2^{-1}v_n(x).
\end{equation*}
Then $v_n\to u$ strongly in $H^1(\mathbb{R}^N)$,
\begin{equation*}
\|u_n\|_2^2=a,\ \
\|u_n\|_{2^*}^{2^*}=(a^{1/2}\|v_n\|_2^{-1})^{2^*}\|v_n\|_{2^*}^{2^*},
\end{equation*}
\begin{equation*}
\|\nabla u_n\|_2^2=(a^{1/2}\|v_n\|_2^{-1})^2\|\nabla v_n\|_2^2,\ \
\|u_n\|_{q}^{q}=(a^{1/2}\|v_n\|_2^{-1})^q\|v_n\|_{q}^{q}.
\end{equation*}
Next we show that $\{u_n\}\subset S_a$ satisfies (\ref{e3.1}). Since
$u_n$ is not a minimizer of (\ref{e3.2}), we deduce that  $\|\nabla
u_n\|_2^2>\mu \gamma_q\|u_n\|_q^q$. Noting that $u\in S_a$ is a
minimizer of (\ref{e3.2}), by direct calculations, we obtain that
\begin{equation*}
\begin{split}
\frac{\|\nabla u_n\|_2^2-\mu \gamma_q\|u_n\|_q^q}{\|u_n\|_{2^*}^{2}}
&=\frac{(a^{1/2}\|v_n\|_2^{-1})^2\|\nabla v_n\|_2^2-\mu
\gamma_q(a^{1/2}\|v_n\|_2^{-1})^q\|v_n\|_{q}^{q}}{(a^{1/2}\|v_n\|_2^{-1})^{2}\|v_n\|_{2^*}^{2}}\\
&\to \frac{(a^{1/2}\|u\|_2^{-1})^2\|\nabla u\|_2^2-\mu
\gamma_q(a^{1/2}\|u\|_2^{-1})^q\|u\|_{q}^{q}}{(a^{1/2}\|u\|_2^{-1})^{2}\|u\|_{2^*}^{2}}=0.
\end{split}
\end{equation*}
Thus $\{u_n\}\subset S_a$ satisfies (\ref{e3.1}).

\textbf{Case 2 ($\mu a^{\frac{q(1-\gamma_q)}{2}}>\bar{a}_N$)}.
Define
\begin{equation*}
f(u):=\frac{\|\nabla
u\|_2^{q\gamma_q}\|u\|_2^{q(1-\gamma_q)}}{\|u\|_q^q}.
\end{equation*}
By (\ref{e3.2}), for any $M>\frac{1}{C_{N,q}^q}$, there exists $u\in
H^1(\mathbb{R}^N)\setminus \{0\}$ such that $f(u)=M$. For any
$\alpha, \beta>0$, we define $\tilde{u}(x):=\alpha u(\beta x)$. By
direct calculations, we have
\begin{equation*}
\|\tilde{u}\|_2=\alpha \beta^{-N/2}\|u\|_2,\ \
\|\tilde{u}\|_q=\alpha \beta^{-N/q}\|u\|_q,\ \  \|\nabla
\tilde{u}\|_2=\alpha \beta \beta^{-N/2}\|\nabla u\|_2
\end{equation*}
and
\begin{equation*}
f(\tilde{u})=\frac{(\alpha \beta \beta^{-N/2}\|\nabla
u\|_2)^{q\gamma_q}(\alpha
\beta^{-N/2}\|u\|_2)^{q(1-\gamma_q)}}{(\alpha
\beta^{-N/q}\|u\|_q)^q}=M.
\end{equation*}
So we can choose
$\alpha=\frac{1}{\|u\|_q}\left(\frac{\|u\|_2}{a^{1/2}\|u\|_q}\right)^{N/2}$
and $\beta=\left(\frac{\|u\|_2}{a^{1/2}\|u\|_q}\right)^{q/2}$ such
that $\|\tilde{u}\|_2^2=a$ and $\|\tilde{u}\|_q=1$.

Under the assumption $\mu a^{\frac{q(1-\gamma_q)}{2}}>\bar{a}_N$, we
have $\mu \gamma_q a^{\frac{q(1-\gamma_q)}{2}}>\frac{1}{C_{N,q}^q}$.
Thus, there exists $\{A_n\}\subset \mathbb{R}$ with $A_n>0$ and
$A_n\to 0$ as $n\to\infty$ such that
\begin{equation*}
M_n:=(\mu \gamma_q+A_n)
a^{\frac{q(1-\gamma_q)}{2}}>\frac{1}{C_{N,q}^q}.
\end{equation*}
For such chosen $M_n$, we choose $\{u_n\}\subset H^1(\mathbb{R}^N)$
such that $\|u_n\|_2^2=a$, $\|u_n\|_q=1$ and $f(u_n)=M_n$. Then we
have
\begin{equation*}
\|\nabla u_n\|_2^2=(\mu \gamma_q+A_n)\|u_n\|_q^q>\mu \gamma_q
\|u_n\|_q^q,
\end{equation*}
\begin{equation*}
1=\|u_n\|_q\leq
\|u_n\|_2^{1-\theta}\|u_n\|_{2^*}^{\theta}=a^{\frac{1-\theta}{2}}\|u_n\|_{2^*}^{\theta}\
\mathrm{with}\ \frac{1}{q}=\frac{1-\theta}{2}+\frac{\theta}{2^*},
\end{equation*}
and
\begin{equation*}
\frac{\|\nabla u_n\|_2^2-\mu
\gamma_q\|u_n\|_q^q}{\|u_n\|_{2^*}^{2}}=\frac{A_n\|u_n\|_q^q}{\|u_n\|_{2^*}^2}\to
0\ \mathrm{as}\ n\to \infty.
\end{equation*}
That is, $\{u_n\}\subset S_a$ satisfies (\ref{e3.1}). The proof is
complete.
\end{proof}

\textbf{Proof of Theorem \ref{thm1.9}}. In view of Lemma
\ref{lem3.2}, the proof of Theorem \ref{thm1.9} is the same to
Proposition 3.2 (2) in \cite{Wei-Wu 2021}.

\bigskip

\textbf{Acknowledgements.} This work is supported by the National
Natural Science Foundation of China (No. 12001403).



\end{document}